\documentclass[11pt,reqno]{amsart}

\usepackage[margin=3cm]{geometry}
\usepackage{lineno}
\usepackage{stmaryrd}

\usepackage[usenames,dvipsnames,svgnames,table]{xcolor}
\usepackage{amssymb}
\usepackage{amsfonts}
\usepackage{amsmath,amscd}
\usepackage{mathrsfs}
\usepackage{verbatim}
\usepackage{enumitem}

\usepackage{mathtools}
\usepackage{hyperref}
\usepackage{fourier}

\makeatletter \@namedef{subjclassname@2020}{\textup{2020} Mathematics
  Subject Classification} \makeatother

\newcommand{\cred}{\color{Red}}
\newcommand{\cgreen}{\color{Green}}
\newcommand{\cblue}{\color{Blue}}
\newcommand{\corange}{\color{Orange}}
\newcommand{\cteal}{\color{Teal}}
\newcommand{\colive}{\color{Olive}}
\newcommand{\cviolet}{\color{Violet}}

\setlength{\oddsidemargin}{-2mm}
\setlength{\evensidemargin}{-2mm}
\setlength{\topmargin}{-1.0cm}
\setlength{\headheight}{0.5cm}
\setlength{\headsep}{1.0cm}
\setlength{\textwidth}{16.5cm}
\setlength{\textheight}{23.2cm}
\setlength{\baselineskip}{20pt}
\thispagestyle{plain}

\newtheorem{theorem}{Theorem}[section]
\newtheorem{lemma}[theorem]{Lemma}     
\newtheorem{corollary}[theorem]{Corollary}
\newtheorem{proposition}[theorem]{Proposition}
\newtheorem*{theorem*}{Theorem}
\newtheorem*{corollary*}{Corollary}

\theoremstyle{definition}

\newtheorem{remark}[theorem]{Remark}

\newtheorem{question}[theorem]{Question}

\newtheorem{example}[theorem]{Example}
\newtheorem{notation}[theorem]{Notation}

\newtheorem*{remark*}{Closing Remark}

\newcommand{\se}{\langle}
\newcommand{\sd}{\rangle}
\newcommand{\fac}{\operatorname{F}}
\newcommand{\setl}{\operatorname{L}}
\newcommand{\ulf}{\operatorname{ULF}}
\newcommand{\frob}{\operatorname{Frob}}
\newcommand{\rem}{\varepsilon}
\newcommand{\mbn}{\mathbb{N}}
\newcommand{\mbz}{\mathbb{Z}}
\newcommand{\mbk}{\mathbb{K}}
\newcommand{\la}{\mathscr{L}_a}
\newcommand{\msl}{\mathscr{L}}
\newcommand{\mcb}{\mathcal{B}}

\newcommand{\mon}{\text{m}}
\newcommand{\nums}{S}
\newcommand{\numGen}{\mathscr{S}}
\newcommand{\length}{\mbox{${\rm length}$}}
\newcommand{\card}{\mbox{${\rm card}$}}
\newcommand{\betti}{\operatorname{Betti}}
\newcommand{\bbetti}{\operatorname{BBetti}}
\newcommand{\ubetti}{\operatorname{UBetti}}

\makeatletter
\def\@bignumber#1#2{%
  \ifx#2\end
    #1\let\next\@gobble
  \else
    #1\hspace{0pt plus 1pt}\let\next\@bignumber
  \fi
  \next#2}
\newcommand{\bignumber}[1]{\@bignumber#1\end}
\makeatother

\title[Semigroups with three consecutive generators]{Elements with
  unique length factorization of a numerical semigroup generated by
  three consecutive numbers}

\author[P. A. Garc\'{\i}a-S\'anchez]{Pedro A. Garc\'{\i}a-S\'anchez}
\thanks{The first author is partially supported by grants
  ProyExcel\_00868 and FQM--343, both funded by the Junta de
  Andaluc\'ia. He also acknowledges financial support from grant
  PID2022-138906NB-C21, funded by
  MICIU/AEI/\bignumber{10.13039}/\bignumber{501100011033} and by ERDF
  ``A way of making Europe''; and from the Spanish Ministry of Science
  and Innovation (MICINN), through the ``Severo Ochoa and Mar\'{\i}a de
  Maeztu Programme for Centres and Unities of Excellence''
  (CEX2020-001105-M).}  \address{Departamento de \'Algebra e IMAG,
  Universidad de Granada. Av. Fuentenueva, S/N. E18071
  Granada. Spain.}  \email{pedro@ugr.es} \author[L. Gonz\'alez]{Laura
  Gonz\'alez} \author[F. Planas-Vilanova]{Francesc Planas-Vilanova}
\address{Departament de Matem\`atiques, Universitat Polit\`ecnica de
  Catalunya. Diagonal 647, ETSEIB, E-08028 Barcelona. Spain.}
\email{laura.gonzalez.hernandez@upc.edu}
\email{francesc.planas@upc.edu} \thanks{The second and third authors
  are partially supported by Grant PID2019-103849GB-I00 funded by
  MICIU/AEI/10.13039/501100011033 and AGAUR project 2021 SGR 00603}

\date{\today}

\subjclass[2020]{20M14,13J05}


\begin{document}

\begin{abstract}
Let $\nums$ be the numerical semigroup generated by three consecutive
numbers $a,a+1,a+2$, where $a\in\mbn$, $a\geq 3$. We describe the
elements of $\nums$ whose factorizations have all the same length, as
well as the set of factorizations of each of these elements. We give
natural partitions of this subset of $\nums$ in terms of the length
and the denumerant. By using Ap\'ery sets and Betti elements we are
able to extend some results, first obtained by elementary means.
\end{abstract}

\maketitle 

\section{Introduction}\label{sec-intro}

Let $\nums=\se a,a+1,a+2\sd$ be the numerical semigroup generated by
$a,a+1,a+2$, where $a\in\mbn$, $a\geq 3$, $\mbn$ being the set of
non-negative integers. The \emph{Frobenius number} of $\nums$, i.e.,
the largest integer that does not belong to $\nums$, is
$\frob(\nums)=\lfloor a/2\rfloor a-1$ (see,
e.g.,\cite[Corollary~5]{gr}, \cite[Theorem~3.3.1]{ramirez} or
\cite{so}). If $r\in\mbn$, let
\begin{eqnarray*}
&&\fac(r,\nums)=\{\alpha=(\alpha_1,\alpha_2,\alpha_3)\in\mbn^3 : 
\alpha_1 a+\alpha_2(a+1)+\alpha_3(a+2)=r\}
\end{eqnarray*}
stand for the \emph{set of factorizations of} $r$. Note that if
$r\not\in\nums$, $\fac(r,\nums)=\emptyset$.  The following vector
space determines $\fac(r,\nums)$: let $x,y,z$ be variables over a
field $\mbk$ and $W_r= \se \mon^{\alpha} : \alpha\in\fac(r,\nums)\sd$
be the $\mbk$-vector space spanned by the monomials
$\mon^{\alpha}:=x^{\alpha_1}y^{\alpha_2}z^{\alpha_3}$, with
$\alpha\in\fac(r,\nums)$. If $r\not\in\nums$, we just define
$W_r=\{0\}$.  Let $\delta_r=\dim W_r$ be the dimension of $W_r$ as a
$\mbk$-vector space. Note that $\delta_r$ is equal to the cardinality
of $\fac(r,\nums)$, which is known as the {\em denumerant} of $r$. If
$r\in\nums$, then $\delta_r\geq 1$. For any $\alpha\in\fac(r,\nums)$,
the \emph{length} of $\alpha$ is defined as
$\length(\alpha)=|\alpha|=\alpha_1+\alpha_2+\alpha_3$.  Let
\begin{eqnarray*}
\setl(r,\nums)=\{|\alpha| :  \alpha\in\fac(r,\nums)\}
\phantom{+}\mbox{and}\phantom{+}
\ulf(\nums)=\{r\in\nums :  \card(\setl(r,\nums))=1\}
\end{eqnarray*}
be the set of lengths of $r\in\nums$ and the set of elements of
$\nums$ with a \emph{unique length factorization}, respectively. Given
$\ell,d\in\mbn$, $d\geq 1$, let
\begin{eqnarray*}
\nums^{\ell}=\{r\in\nums : \setl(r,\nums)=\{\ell\}\}\phantom{+}
\mbox{and}\phantom{+} \nums_d=\{r\in\nums : \card(\fac(r,\nums))=d\}.
\end{eqnarray*}
For $a,b\in\mbz$, set $\llbracket a,b\rrbracket=\{r\in\mbz : a\leq
r\leq b\}$ and similarly for $\llbracket a,b)\!)$, $(\!(a,b\rrbracket$
and $(\!(a,b)\!)$.  For $x$ a real number, set $\lfloor
x\rfloor=\max\{z\in \mathbb{Z} : z\le x\}$ and $\lceil
x\rceil=\min\{z\in \mathbb{Z} : x\le z\}$ (the floor and ceiling
functions of $x$, respectively).  Let $\ell_r=\lfloor r/a\rfloor$.

We summarise now the present work. In the first part, the proofs are
simple and there is no need of previous knowledge on numerical
semigroups. We begin by proving that there exists $\la\in\nums$, such
that $\nums\cap \llbracket 0,\la)\!)\subseteq \ulf(\nums)$ and such
that $\la\not\in\ulf(\nums)$. Moreover, $\setl(r,\nums)=\{\ell_r\}$,
for each $r\in\nums\cap \llbracket 0,\la)\!)$
(Theorem~\ref{theolength}).  Given $r\in\mbn$, we introduce an integer
vector $\phi_r\in\mbz^3$ and study the membership problem of $r$, in
terms of $\ell_r$ and $\phi_r\in\mbz^3$. When $r\in\nums\cap
\llbracket 0,\la)\!)$, then $\phi_r\in\mbn^3$ and $\phi_r$ becomes a
``seed'' factorization which allows us to completely describe all the
factorizations of $r$ (Theorem~\ref{theo-length}). As a corollary, we
obtain a partition of the subset $\nums\cap \llbracket
0,(a+2)L\rrbracket$ in terms of $\nums^{\ell}$
(Corollary~\ref{cor-length}), where $L=\lfloor
(a-1)/2\rfloor$. Subsequently, we analise the membership problem of
$r$ in terms of the denumerant $\delta_r$ and two other invariants,
$\iota_r$ and $c_r$, attached to the seed vector $\phi_r$ of $r$
(Theorem~\ref{theo-dim}). As a consequence, we get another partition
of $\nums\cap \llbracket 0,(a+2)L\rrbracket$, now in terms of
$\nums_d$ (Corollary~\ref{cor-dim}).

In the second part of the paper, by using Ap\'ery sets and Betti
elements (see, e.g., \cite{rg}, \cite{ns-ap}), we are able to achieve
some more general results. Let $\numGen$ be any numerical
semigroup. Denote by $\betti(\numGen)$ the set of Betti elements of
$\numGen$ and set
$\bbetti(\numGen)=\betti(\numGen)\bigcap\ulf(\numGen)$ and
$\ubetti(\numGen)=\betti(\numGen)\setminus \bbetti(\numGen)$ (B and U,
standing for \emph{balanced} and \emph{unbalanced}).  Then, in
Theorem~\ref{th-appendix}, we show that
$\ulf(\numGen)=\operatorname{Ap}(\numGen,\ubetti(\numGen))$ and, in
Corollary~\ref{cor-minbetti}, we show that $\ulf(\numGen)$ is a finite
set, $\numGen\cap \llbracket 0,b)\!)\subseteq \ulf(\numGen)$, where
$b=\min(\ubetti(\numGen))$, and $b\not\in\ulf(\numGen)$.  Again with
$\numGen=\nums=\se a,a+1,a+2\sd$, one describes the whole set
$\ulf(\nums)$ (see Lemmas~\ref{lem:ap-a-even}
and~\ref{lem:ap-a-odd}). The last results of the paper extend the
aforementioned partitions of $\nums\cap \llbracket 0,(a+2)L\rrbracket$
in terms of $\nums^{\ell}$ and $\nums_d$ to the whole set
$\ulf(\nums)$ (see Propositions~\ref{pr:partitionSell}
and~\ref{prop:Sd}).

Our interest in the specific and simple semigroup $\nums=\se
a,a+1,a+2\sd$ was awakened studying the work of Moh in \cite{moh1}
(see also \cite{mss}, for a recent related paper). In his paper, Moh
is centred in finding prime ideals with a minimal generating set of
cardinality arbitrarily high in a power series ring $\mbk\llbracket
x,y,z\rrbracket$ in three variables, generalising a classic work of
Macaulay (see, e.g, the paper of Abhyankar on Macaulay's examples
\cite{abhyankar}). In fact, the work of Moh is a natural extension of
that of Herzog in \cite{herzog}, where he proves that the definition
ideals of the semigroup ring $\mbk[t^a,t^b,t^c]$ can be generated by
two or three binomials. Moh's primes are the definition ideal of a
``quasi-monomial'' curve of the type
$(t^{ab}+t^{ab+\lambda},t^{(a+1)b},t^{(a+2)b})$, where $a$ is an odd
natural number. In his work, Moh studies the numerical semigroup
generated by $a+1,a+2$ and uses an ordering given by the mapping
$\sigma:\mbk\llbracket x,y,z\rrbracket\to \mbk\llbracket
x,y,z\rrbracket$, $\sigma(x)=x^{a}$, $\sigma(y)=y^{a+1}$,
$\sigma(z)=z^{a+2}$, $a$ odd. In his proofs, Moh needs to calculate
the denumerant of $r\in\nums$, that is, the dimension of the vector
space $W_r$, and the dimension of a subspace $V_r\subseteq W_r$
attached to the definition ideal of the given affine curve. In fact,
Moh essentially works with the elements of $\nums$ involved in
Theorem~4.3, the key result in \cite{moh1}. To our purposes, we are
mainly interested in the elements of $\nums\cap \llbracket
0,\la)\!)$. To undertake these calculations and some possible
variants, and to deeply understand the reason why they work, it is
very convenient to have the elements of the semigroup $\nums$
stratified according to their length and denumerant. Results like
Corollaries~\ref{cor-length} and~\ref{cor-dim} provide us with the
desired partitions.

There is a very extensive bibliography on (the number of)
factorizations and their lengths. See, e.g., and just to mention a few
of them, \cite{cglm, cggr,gos, mo} and, particularly, \cite{rg} and
the references therein. Note that the semigroup $\nums$ can be seen
threefold: as a numerical semigroup generated by an interval, as a
numerical semigroup generated by an arithmetic sequence, or even as an
embedding dimension three numerical semigroup (see, e.g. \cite{ag,
  agl, gou, goy, gr}). Some of the results presented here could be
deduced from previous works, for instance, in \cite[Corollary~2]{gr},
Garc\'{\i}a-S\'anchez and Rosales give a solution to the membership
problem for numerical semigroups generated by intervals. On the other
hand, in \cite{ag} and \cite{agl}, Aguil\'o, Garc\'{\i}a-S\'anchez and
Llena are able to give a constructive way of finding the set of
factorizations of an embedding dimension three numerical semigroup
using the so-called ``basic factorization''. The present note differs
from the above works in that our object of study is focused on the
semigroup $\nums$, so we can deepen on theses questions. On the other
hand, the introduction and study of the set $\ulf(\numGen)$ of
elements of a numerical semigroup $\numGen$ with a unique length
factorization seems to be a new and original object of study.

\section{Elements with unique length factorization}\label{sec-first}

Let $\nums=\se a,a+1,a+2\sd$ be the numerical semigroup generated by
$a,a+1,a+2$, where $a\in\mbn$, $a\geq 3$.  We begin by proving that
all the factorizations of $r\in\nums$, $r$ smaller enough, have length
$\lfloor r/a\rfloor$.

\begin{notation}
Let $\la:=\lfloor a/2\rfloor (a+2)$, if $a$ is even, or $\la:=(\lfloor
a/2\rfloor +2)a$, if $a$ is odd.  Clearly, $\la\in\nums$. Note that:
\begin{eqnarray*}
\frob(\nums)=\left\lfloor\frac{a}{2}\right\rfloor a-1<\left\lfloor
\frac{a}{2}\right\rfloor (a+2) \leq\left(\left\lfloor
\frac{a}{2}\right\rfloor +1\right)a< \left(\left\lfloor
\frac{a}{2}\right\rfloor +2\right)a\leq (a+1)+\left\lfloor
\frac{a}{2}\right\rfloor \left(a+2\right).
\end{eqnarray*}
\end{notation}

\begin{theorem}\label{theolength}
There is an inclusion $\nums\cap \llbracket
0,\la)\!)\subseteq\ulf(\nums)$, where
$\la\not\in\ulf(\nums)$. Moreover, $\setl(r,\nums)=\{\ell_r\}$, for
every $r\in\nums\cap \llbracket 0,\la)\!)$.
\end{theorem}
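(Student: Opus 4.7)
The plan rests on the algebraic identity $r = \alpha_1 a + \alpha_2(a+1)+\alpha_3(a+2) = |\alpha|\,a + (\alpha_2+2\alpha_3)$, valid for any $\alpha\in\fac(r,\nums)$. In particular, if $0\le \alpha_2+2\alpha_3 < a$ then $|\alpha| = \lfloor r/a\rfloor = \ell_r$. So to prove the theorem it suffices to establish the following key statement: \emph{every} factorization $\alpha$ of \emph{every} element $r\in\nums$ with $r<\la$ satisfies $\alpha_2+2\alpha_3<a$. Once this is shown, every $r\in\nums\cap\llbracket 0,\la)\!)$ lies in $\ulf(\nums)$ with $\setl(r,\nums)=\{\ell_r\}$.

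I would prove the contrapositive: any $\alpha\in\mbn^3$ with $\alpha_2+2\alpha_3\ge a$ represents an element $\alpha_1 a+\alpha_2(a+1)+\alpha_3(a+2)\ge\la$. Since $\alpha_1 a\ge 0$, it suffices to take $\alpha_1=0$ and minimise $f(\alpha_2,\alpha_3)=\alpha_2(a+1)+\alpha_3(a+2)$ subject to $\alpha_2,\alpha_3\ge 0$ and $s:=\alpha_2+2\alpha_3\ge a$. Substituting $\alpha_2=s-2\alpha_3$ gives $f = s(a+1)-a\alpha_3$, which is decreasing in $\alpha_3$; the constraint $\alpha_2\ge 0$ forces $\alpha_3\le \lfloor s/2\rfloor$, so the optimum over $\alpha_3$ (for fixed $s$) is $s(a+1)-a\lfloor s/2\rfloor$.

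A short parity case-analysis then minimises over $s\ge a$. For $a=2k$ even, the minimum is attained at $s=a$ (even, $\lfloor s/2\rfloor=k$, so $\alpha_3=k$, $\alpha_2=0$), yielding $a(a+1)-ak = a(k+1)=(a/2)(a+2)=\la$; odd values of $s$ and larger even values give strictly larger $f$. For $a=2k+1$ odd, the minimum is attained at $s=a$ (odd, $\lfloor s/2\rfloor=k$, so $\alpha_3=k$, $\alpha_2=1$), yielding $a(a+1)-ak = a(k+2)=(\lfloor a/2\rfloor+2)a=\la$; again all other admissible $s$ give strictly larger $f$. This closes the key statement and hence the inclusion $\nums\cap\llbracket 0,\la)\!)\subseteq \ulf(\nums)$ together with $\setl(r,\nums)=\{\ell_r\}$ throughout.

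Finally, to see $\la\notin\ulf(\nums)$ I would exhibit two factorizations of $\la$ of different lengths. For $a$ even, $\la=(a/2)(a+2)=(a/2+1)a$ gives lengths $a/2$ and $a/2+1$. For $a$ odd, the minimiser $(0,1,\lfloor a/2\rfloor)$ from the previous step has length $\lfloor a/2\rfloor+1$, while $\la=(\lfloor a/2\rfloor+2)a$ gives length $\lfloor a/2\rfloor+2$. The only slightly delicate step is the parity bookkeeping in the minimisation, but after the change of variables $s=\alpha_2+2\alpha_3$ it reduces to a handful of elementary inequalities.
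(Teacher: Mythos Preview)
Your proof is correct and takes a genuinely different, more streamlined route than the paper's.

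The paper proves the theorem via a long auxiliary lemma (Lemma~\ref{lemma-length}) with eleven items, splitting into cases according to whether $r\le\frob(\nums)$, $\frob(\nums)<r<\lfloor a/2\rfloor(a+2)$, etc., and invoking Hermite's identity to control $\lfloor |\alpha|(a+2)/a\rfloor$. Your argument bypasses all of this: from $r=|\alpha|a+(\alpha_2+2\alpha_3)$ you reduce the problem to showing that $\alpha_2+2\alpha_3\ge a$ forces $r\ge\la$, and then you solve a clean integer minimisation. In fact your parity case split is slightly heavier than necessary: writing $g(s)=s(a+1)-a\lfloor s/2\rfloor$, one has $g(2m)=m(a+2)$ and $g(2m+1)=m(a+2)+(a+1)$, so $g$ is strictly increasing on \emph{all} integers and the minimum over $s\ge a$ is simply $g(a)$, which equals $\la$ in both parities. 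Either way the argument goes through.

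What the paper's longer approach buys is a collection of intermediate inequalities (items (1)--(11) of Lemma~\ref{lemma-length}) that are reused downstream, for instance in Corollary~\ref{cor-length}. Your approach is cleaner for the theorem at hand but would need to be supplemented if those finer bounds were required later.
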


The proof of the proposition is a consequence of the following lemma. 

\begin{lemma}\label{lemma-length}
Let $r\in\nums$, $r\neq 0$, and $\alpha\in\fac(r,\nums)$. Then
\begin{itemize}
\item[$(1)$] $|\alpha|a\leq r\leq |\alpha|(a+2)$;
\item[$(2)$] $|\alpha|\leq \lfloor r/a\rfloor$;
\item[$(3)$] $\lfloor r/a\rfloor \leq
  |\alpha|+\sum_{j=0}^{|\alpha|-1}\lfloor
  (2|\alpha|+aj)/a|\alpha|\rfloor$.
\end{itemize}
Furthermore,
\begin{itemize}
\item[$(4)$] If $r\leq\frob(\nums)$, then $|\alpha|=\lfloor r/a\rfloor
  $ and $\lfloor r/a\rfloor <\lfloor a/2\rfloor$.
\item[$(5)$] If $r>\frob(\nums)$, then $\lfloor a/2\rfloor \leq |\alpha|$.
\item[$(6)$] If $r<\lfloor a/2\rfloor (a+2)$, then $\lfloor r/a\rfloor
  \leq \lfloor a/2\rfloor$.
\item[$(7)$] If $\frob(\nums)<r<\lfloor a/2\rfloor (a+2)$, then
  $|\alpha|=\lfloor r/a\rfloor =\lfloor a/2\rfloor$.
\end{itemize}
Suppose that $a$ is even, so $\la=\lfloor a/2\rfloor (a+2)$.
\begin{itemize}  
\item[$(8)$] If $r\geq \la$, then $\lfloor r/a\rfloor >\lfloor
  a/2\rfloor$.  If $r=\la$, then $(0,0,\lfloor a/2\rfloor )$ and
  $(\lfloor a/2\rfloor +1,0,0)$ are two factorizations of $r$ of
  different length.
\end{itemize}
Suppose that $a$ is odd, so $\la=(\lfloor a/2\rfloor +2)a$.
\begin{itemize}
\item[$(9)$] If $r=\lfloor a/2\rfloor (a+2)$, then $|\alpha|=\lfloor
  r/a\rfloor =\lfloor a/2\rfloor$.
\item[$(10)$] If $\lfloor a/2\rfloor (a+2)<r<\la$, then
  $|\alpha|=\lfloor r/a\rfloor =\lfloor a/2\rfloor +1$.
\item[$(11)$] If $r\geq \la$, then $\lfloor r/a\rfloor >\lfloor
  a/2\rfloor +1$.  If $r=\la$, then $(0,1,\lfloor a/2\rfloor )$ and
  $(\lfloor a/2\rfloor +2,0,0)$ are two factorizations of $r$ of
  different length.
\end{itemize}
\end{lemma}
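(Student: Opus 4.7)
The plan is to prove (1) and (2) directly from the defining identity $r = \alpha_1 a + \alpha_2(a+1) + \alpha_3(a+2)$: bounding each summand by $a$ from below and $a+2$ from above gives $|\alpha|a \leq r \leq |\alpha|(a+2)$, and (2) follows by dividing by $a$ and taking floors. For (3), the key move is to rewrite the summand as $(2|\alpha|+aj)/(a|\alpha|) = 2/a + j/|\alpha|$ and invoke Hermite's identity $\sum_{k=0}^{n-1}\lfloor x+k/n\rfloor = \lfloor nx\rfloor$ with $n=|\alpha|$ and $x=2/a$ to collapse the sum to $\lfloor 2|\alpha|/a\rfloor$. Then (3) becomes $\lfloor r/a\rfloor \leq |\alpha|+\lfloor 2|\alpha|/a\rfloor$, which is immediate from $r \leq |\alpha|(a+2)$ in (1).

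With (1)--(3) in hand, items (4)--(7) fall out by case analysis on the size of $r$. For (4), $r \leq \frob(\nums)=\lfloor a/2\rfloor a-1$ forces $\lfloor r/a\rfloor < \lfloor a/2\rfloor$; combined with (2) this gives $|\alpha|<\lfloor a/2\rfloor$, so $2|\alpha|/a<1$, the floor in the simplified form of (3) vanishes, and (3) sharpens to $\lfloor r/a\rfloor \leq |\alpha|$, matching (2). For (5), combining $r \geq \lfloor a/2\rfloor a$ with $r \leq |\alpha|(a+2)$ yields $|\alpha| \geq \lfloor a/2\rfloor a/(a+2)$; since $2\lfloor a/2\rfloor \leq a < a+2$, this lower bound exceeds $\lfloor a/2\rfloor-1$, forcing $|\alpha| \geq \lfloor a/2\rfloor$. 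For (6), $r<\lfloor a/2\rfloor(a+2)$ gives $r/a<\lfloor a/2\rfloor+2\lfloor a/2\rfloor/a \leq \lfloor a/2\rfloor+1$, and (7) is simply the chain (5) $\leq$ (2) $\leq$ (6).

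Items (8)--(11) split according to the parity of $a$, matching the definition of $\la$. If $a$ is even and $r \geq \la=(a/2)(a+2)$, then $r/a \geq a/2+1>\lfloor a/2\rfloor$, and at $r=\la$ the factorizations $(0,0,a/2)$ and $(a/2+1,0,0)$ have lengths $a/2$ and $a/2+1$, witnessing (8). If $a$ is odd, a direct computation gives $\lfloor a/2\rfloor(a+2)/a=(a+1)/2-1/a$, so $\lfloor r/a\rfloor=\lfloor a/2\rfloor$ at $r=\lfloor a/2\rfloor(a+2)$; together with (5) and (2) this yields (9). For (10), the next integer value of $r$ is $a(a+1)/2$, forcing $\lfloor r/a\rfloor \geq \lfloor a/2\rfloor+1$, while $r<\la=(\lfloor a/2\rfloor+2)a$ gives $\lfloor r/a\rfloor \leq \lfloor a/2\rfloor+1$; to upgrade (5) from $|\alpha|\geq\lfloor a/2\rfloor$ to $|\alpha|\geq\lfloor a/2\rfloor+1$, I would re-apply (3): if $|\alpha|=(a-1)/2$ then $\lfloor 2|\alpha|/a\rfloor=0$, contradicting $\lfloor r/a\rfloor\geq\lfloor a/2\rfloor+1$. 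Finally for (11), $r\geq(\lfloor a/2\rfloor+2)a$ yields $\lfloor r/a\rfloor\geq\lfloor a/2\rfloor+2$, and at $r=\la$ the factorizations $(0,1,\lfloor a/2\rfloor)$ and $(\lfloor a/2\rfloor+2,0,0)$ exhibit two distinct lengths.

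The main obstacle is bookkeeping: the eleven sub-statements interact through shared inequalities and a parity split, and care is required to apply Hermite's identity cleanly in (3), verify the arithmetic at the three boundary values $\frob(\nums)$, $\lfloor a/2\rfloor(a+2)$, and $\la$, and produce the explicit factorizations at $r=\la$ which will later deliver $\la\notin\ulf(\nums)$ in Theorem~\ref{theolength}. Everything else is routine manipulation of inequalities.
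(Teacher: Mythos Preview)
Your proposal is correct and follows essentially the same approach as the paper: items (1)--(2) are immediate, (3) via Hermite's identity, and (4)--(11) by case analysis on the size of $r$ relative to $\frob(\nums)$, $\lfloor a/2\rfloor(a+2)$, and $\la$. Your only real deviation is cosmetic: you collapse the Hermite sum in (3) to the closed form $\lfloor 2|\alpha|/a\rfloor$ up front and reuse this simplification in (4) and (10), whereas the paper keeps the sum expanded and argues each term vanishes; and in (9) you compute $\lfloor a/2\rfloor(a+2)/a=(a+1)/2-1/a$ directly rather than invoking Hermite again---both are minor streamlinings of the same argument (and in (10) the paper's route via $\lfloor a/2\rfloor(a+2)<r\le|\alpha|(a+2)\Rightarrow|\alpha|>\lfloor a/2\rfloor$ is arguably quicker than your detour through (3)).
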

\begin{proof}[Proof of the Lemma]
Clearly, $|\alpha|a\leq\alpha_1a+\alpha_2(a+1)+\alpha_3(a+2)=r\leq
|\alpha|(a+2)$. Dividing by $a$ the first inequality, $|\alpha|\leq
r/a$ and $|\alpha|\leq \lfloor r/a\rfloor$. This shows items $(1)$
and $(2)$.  Dividing by $a$ the second inequality, $r/a\leq
|\alpha|(a+2)/a$ and $\lfloor r/a\rfloor \leq \lfloor
|\alpha|(a+2)/a\rfloor$.  Let $m\in\mbn$, $m\geq 1$. By Hermite's
identity,
\begin{eqnarray}\label{hermite}
\left\lfloor m\frac{a+2}{a}\right\rfloor =\sum_{j=0}^{m-1}\left\lfloor
\frac{a+2}{a}+\frac{j}{m}\right\rfloor =\sum_{j=0}^{m-1}\left\lfloor
\frac{am+2m+aj}{am}\right\rfloor =m+\sum_{j=0}^{m-1}\left\lfloor
\frac{2m+aj}{am}\right\rfloor .
\end{eqnarray}  
Substituting $m$ by $|\alpha|\neq 0$ in the equality \eqref{hermite},
we deduce the third item.

Suppose that $r\leq\frob(\nums)=\lfloor a/2\rfloor a-1$. So $r<\lfloor
a/2\rfloor a$ and $\lfloor r/a\rfloor <\lfloor a/2\rfloor$. Then, by
item $(2)$,
\begin{eqnarray}\label{rsmall}
|\alpha|\leq \left\lfloor \frac{r}{a}\right\rfloor <\left\lfloor
\frac{a}{2}\right\rfloor .
\end{eqnarray}
For every $0\leq j\leq |\alpha|-1$, we have $2|\alpha|+aj<2\lfloor
a/2\rfloor +a|\alpha|-a\leq a|\alpha|$.  Thus, for $m=|\alpha|$, we
get in the equality \eqref{hermite}: $\lfloor |\alpha|(a+2)/a\rfloor
=|\alpha|$. Hence, $\lfloor r/a\rfloor \leq \lfloor
|\alpha|(a+2)/a\rfloor =|\alpha|$. Using the inequalities
\eqref{rsmall}, we conclude $|\alpha|=\lfloor r/a\rfloor <\lfloor
a/2\rfloor$, which proves the fourth item.

Suppose that $r>\frob(\nums)=\lfloor a/2\rfloor a-1$. By item $(1)$,
$r\leq |\alpha|(a+2)$. Therefore, $\lfloor a/2\rfloor a\leq
|\alpha|(a+2)$. If $|\alpha|\leq \lfloor a/2\rfloor -1$, then $\lfloor
a/2\rfloor a\leq |\alpha|(a+2)\leq (\lfloor a/2\rfloor -1)(a+2)\leq
\lfloor a/2\rfloor a-2$, a contradiction. Thus, $\lfloor a/2\rfloor
\leq |\alpha|$, which proves item $(5)$.  If $r<\lfloor a/2\rfloor
(a+2)$, then $\lfloor r/a\rfloor \leq r/a<\lfloor a/2\rfloor
(a+2)/a\leq \lfloor a/2\rfloor +\lfloor a/2\rfloor (2/a)\leq \lfloor
a/2\rfloor +1$ and $\lfloor r/a\rfloor \leq \lfloor a/2\rfloor$. This
proves item $(6)$. Item $(7)$ follows from items $(2)$, $(5)$ and
$(6)$.

Suppose that $a$ is even and $r\geq \lfloor a/2\rfloor (a+2)$. Then
$\lfloor a/2\rfloor =a/2$ and $r\geq \lfloor a/2\rfloor a+2\lfloor
a/2\rfloor =\lfloor a/2\rfloor a+a$. So $r/a\geq \lfloor a/2\rfloor
+1$ and $\lfloor r/a\rfloor \geq \lfloor a/2\rfloor +1>\lfloor
a/2\rfloor$.  If $r=\lfloor a/2\rfloor (a+2)$, then clearly
$(0,0,\lfloor a/2\rfloor )$ and $(\lfloor a/2\rfloor +1,0,0)$ are two
factorizations of $r$ with different length. This proves item $(8)$.

Suppose that $a$ is odd and that $r=\lfloor a/2\rfloor (a+2)$. Thus,
$r/a=\lfloor a/2\rfloor (a+2)/a$.  Set $m=\lfloor a/2\rfloor
=(a-1)/2\geq 1$. By equation \eqref{hermite},
\begin{eqnarray*}
\left\lfloor \frac{r}{a}\right\rfloor =\left\lfloor
m\frac{a+2}{a}\right\rfloor =m+\sum_{j=0}^{m-1} \left\lfloor
\frac{2m+aj}{am}\right\rfloor .
\end{eqnarray*}
For $0\leq j\leq \lfloor a/2\rfloor -1$, then $2m+aj\leq 2\lfloor
a/2\rfloor +a\lfloor a/2\rfloor -a=(a-1)+a\lfloor a/2\rfloor
-a=a\lfloor a/2\rfloor -1=am-1$. Thus, $\lfloor (2m+aj)/am\rfloor =0$
and $\lfloor r/a\rfloor =m=\lfloor a/2\rfloor$. By $(2)$,
$|\alpha|\leq \lfloor r/a\rfloor$ and, by $(5)$, $\lfloor a/2\rfloor
\leq |\alpha|$. Hence $|\alpha|=\lfloor r/a\rfloor =\lfloor a/2\rfloor
$. This proves item $(9)$.

Suppose that $a$ is odd and that $\lfloor a/2\rfloor (a+2)<r<a(\lfloor
a/2\rfloor +2)$. By $(1)$, $\lfloor a/2\rfloor (a+2)<r\leq
|\alpha|(a+2)$ and $\lfloor a/2\rfloor <|\alpha|$.  Since $r<a(\lfloor
a/2\rfloor +2)$, then $\lfloor r/a\rfloor \leq r/a<\lfloor a/2\rfloor
+2$, so $\lfloor r/a\rfloor \leq \lfloor a/2\rfloor +1$. By $(2)$,
$|\alpha|\leq \lfloor r/a\rfloor$. Therefore, $\lfloor a/2\rfloor
+1\leq |\alpha|\leq \lfloor r/a\rfloor \leq \lfloor a/2\rfloor +1$ and
$|\alpha|=\lfloor r/a\rfloor =\lfloor a/2\rfloor +1$, which shows item
$(10)$.

Suppose that $a$ is odd and $r\geq a(\lfloor a/2\rfloor +2)$. Then
$r/a\geq \lfloor a/2\rfloor +2$ and $\lfloor r/a\rfloor >\lfloor
a/2\rfloor +1$.  If $r=a(\lfloor a/2\rfloor +2)$, then clearly
$(0,1,\lfloor a/2\rfloor )$ and $(\lfloor a/2\rfloor +2,0,0)$ are two
factorizations of $r$ of different length. This shows item $(11)$ and
finishes the whole proof.
\end{proof}

\begin{proof}[Proof of Theorem~\ref{theolength}]
If $r=0$, then $\alpha=0$ and $|\alpha|=\lfloor r/a\rfloor$. If
$r\in\nums$, with $0<r<\la$, and $\alpha\in\fac(r,\nums)$, then
$|\alpha|=\lfloor r/a\rfloor$ by items $(4)$, $(7)$, $(9)$ and $(10)$
of Lemma~\ref{lemma-length}. If $r=\la$, then $r$ has two
factorizations of different length by items $(8)$ and $(11)$ of the
same lemma.
\end{proof}

\begin{example}\label{examplen3}
Let $a=3$ and $\nums=\se 3,4,5\sd$. Then $\frob(\nums)=\lfloor
a/2\rfloor a-1=2$, $\lfloor a/2\rfloor (a+2)=5$ and $\la=(\lfloor
a/2\rfloor +2)a=9$. Here $\fac(0,\nums)=\{(0,0,0)\}$,
$\fac(3,\nums)=\{(1,0,0)\}$, $\fac(4,\nums)=\{(0,1,0)\}$,
$\fac(5,\nums)=\{(0,0,1)\}$, $\fac(6,\nums)=\{(2,0,0)\}$,
$\fac(7,\nums)=\{(1,1,0)\}$, $\fac(8,\nums)=\{(1,0,1),(0,2,0)\}$ and
$\fac(9,\nums)=\{(3,0,0),(0,1,1)\}$.  For every $r\in\nums$, $r\leq
8$, and for every $\alpha\in\fac(r,\nums)$, then $|\alpha|=\lfloor
r/a\rfloor$.  This is no longer true for $9$, since $|(3,0,0)|=3$ and
$|(0,1,1)|=2$, where $(3,0,0),(0,1,1)\in\fac(9,\nums)$.

We can swiftly do these computations with the \texttt{numericalsgps}
\cite{numericalsgps} \texttt{GAP} \cite{gap} package.
\begin{verbatim}
gap> s:=NumericalSemigroup(3,4,5);;
gap> List(Intersection([0..9],s),x->[x,Factorizations(x,s)]);
[ [ 0, [ [ 0, 0, 0 ] ] ], [ 3, [ [ 1, 0, 0 ] ] ], [ 4, [ [ 0, 1, 0 ] ] ],
  [ 5, [ [ 0, 0, 1 ] ] ], [ 6, [ [ 2, 0, 0 ] ] ], [ 7, [ [ 1, 1, 0 ] ] ],
  [ 8, [ [ 0, 2, 0 ], [ 1, 0, 1 ] ] ], [ 9, [ [ 3, 0, 0 ], [ 0, 1, 1 ] ] ] ]   
\end{verbatim}
\end{example}

\begin{question}
Given a numerical semigroup $\numGen$, what is the maximum element
$\msl\in\numGen$ such that $\numGen\cap \llbracket 0,\msl)\!)\subseteq
\ulf(\numGen)$ and $\msl\not\in \ulf(\numGen)$?  We come back to this
question in Section~\ref{sec-Betti} (see
Corollary~\ref{cor-minbetti}).
\end{question}

We need two preliminary results before introducing some more notations.

\begin{proposition}\label{prop-omega}
Let $r\in\nums\cap \llbracket 0,\la)\!)$. Let
$\alpha,\gamma\in\fac(r,\nums)$, $\alpha\neq\gamma$.  Set
$\omega:=(-1,2,-1)\in\mbz^3$. Then:
\begin{itemize}
\item[$(1)$] $\alpha_1+\alpha_2+\alpha_3=\gamma_1+\gamma_2+\gamma_3$;
\item[$(2)$] $\alpha_2+2\alpha_3=\gamma_2+2\gamma_3$;
\item[$(3)$] $\alpha_1\neq\gamma_1$, $\alpha_2\neq\gamma_2$ and
  $\alpha_3\neq\gamma_3$.
\item[$(4)$] If $\gamma_2>\alpha_2$, then $\gamma=\alpha+j\omega$, for
  some $1\leq j\leq\min(\alpha_1,\alpha_3)$.\\ If $\gamma_2<\alpha_2$,
  then $\alpha=\gamma+j\omega$, for some $1\leq
  j\leq\min(\gamma_1,\gamma_3)$.
\end{itemize}
\end{proposition}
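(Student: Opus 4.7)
The plan is to bootstrap everything off Theorem~\ref{theolength}: since $r\in\nums\cap\llbracket 0,\la)\!)$, every factorization of $r$ has the common length $\ell_r$, which immediately yields item $(1)$. For item $(2)$, I would rewrite the defining identity $\alpha_1 a+\alpha_2(a+1)+\alpha_3(a+2)=r$ as $|\alpha|a+(\alpha_2+2\alpha_3)=r$, and do the same for $\gamma$; subtracting and invoking $(1)$ leaves $\alpha_2+2\alpha_3=\gamma_2+2\gamma_3$.

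For item $(3)$, I would argue by contradiction on each coordinate, using $(1)$ and $(2)$ as a linear system in the differences. For instance, if $\alpha_1=\gamma_1$, then $(1)$ collapses to $\alpha_2+\alpha_3=\gamma_2+\gamma_3$; combined with $(2)$ this forces $\alpha_3=\gamma_3$ and $\alpha_2=\gamma_2$, contradicting $\alpha\neq\gamma$. The other two cases are entirely analogous, since in the $2\times 3$ system determined by $(1)$ and $(2)$, fixing any one coordinate pins down the other two.

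For item $(4)$, the key observation is that the difference vector $d=\gamma-\alpha\in\mbz^3$ satisfies $d_1+d_2+d_3=0$ and $d_2+2d_3=0$ by $(1)$ and $(2)$. Solving yields $d_1=d_3$ and $d_2=-2d_3$, so $d=-d_3\,\omega$; setting $j=-d_3$ gives $\gamma=\alpha+j\omega$ with $j\in\mbz$. The hypothesis $\gamma_2>\alpha_2$ forces $2j>0$, i.e.\ $j\geq 1$, and the requirement that $\gamma_1,\gamma_3\in\mbn$ becomes $\alpha_1-j\geq 0$ and $\alpha_3-j\geq 0$, so $j\leq \min(\alpha_1,\alpha_3)$. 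The case $\gamma_2<\alpha_2$ is symmetric, obtained by swapping the roles of $\alpha$ and $\gamma$.

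I do not anticipate a genuine obstacle here; the proof is essentially linear algebra once Theorem~\ref{theolength} is in hand. The only subtle point is to be careful that the two linear conditions $(1)$ and $(2)$ together with the defining equations are genuinely independent, so that a one-parameter family spanned by $\omega$ captures the whole fibre; this is what item $(3)$ guarantees, since no difference vector can vanish in any coordinate without being zero.
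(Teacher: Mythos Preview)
Your proposal is correct and follows essentially the same approach as the paper: both invoke Theorem~\ref{theolength} for item~$(1)$, derive item~$(2)$ by subtracting the factorization identities written as $|\alpha|a+(\alpha_2+2\alpha_3)=r$, argue item~$(3)$ by noting that the linear system $(1)$--$(2)$ pins down all coordinates once one is fixed, and for item~$(4)$ solve the two linear constraints on the difference to see it is an integer multiple of $\omega$, with the sign and bound on $j$ forced by $\gamma_2>\alpha_2$ and non-negativity of $\gamma_1,\gamma_3$. The paper carries out item~$(4)$ coordinate-by-coordinate rather than introducing the difference vector $d$ explicitly, but the content is identical.
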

\begin{proof}
By Theorem~\ref{theolength}, $|\alpha|=|\gamma|=\lfloor
r/a\rfloor$. So
$\alpha_1+\alpha_2+\alpha_3=|\alpha|=|\gamma|=\gamma_1+\gamma_2+\gamma_3$.
In particular,
$\alpha_2+2\alpha_3=r-|\alpha|a=r-|\gamma|a=\gamma_2+2\gamma_3$. This
shows items $(1)$ and $(2)$. If $\alpha_j=\gamma_j$, for some
$j\in\{1,2,3\}$, using items $(1)$ and $(2)$, it would follow
$\alpha=\gamma$. This proves item $(3)$. Suppose that
$\gamma_2>\alpha_2$. By $(2)$,
$2\gamma_3=\alpha_2-\gamma_2+2\alpha_3<2\alpha_3$, so
$\gamma_3<\alpha_3$. Using items $(1)$ and $(2)$,
$\alpha_1-\gamma_1=(\gamma_2-\alpha_2)+(\gamma_3-\alpha_3)=
(2\alpha_3-2\gamma_3)+(\gamma_3-\alpha_3)=\alpha_3-\gamma_3>0$ and
$\alpha_1-\gamma_1>0$. Let $j:=\alpha_1-\gamma_1=\alpha_3-\gamma_3\geq
1$. In particular, $\gamma_1=\alpha_1-j$ and
$\gamma_3=\alpha_3-j$. Since $\gamma_1,\gamma_3\geq 0$, we obtain
$j\leq\min(\alpha_1,\alpha_3)$. Moreover, from $(1)$,
$\gamma_2=\alpha_2+(\alpha_1-\gamma_1)+(\alpha_3-\gamma_3)=
\alpha_2+2j$. Hence, $\gamma=\alpha+j(-1,2,-1)=\alpha+j\omega$, where
$1\leq j\leq\min(\alpha_1,\alpha_3)$.  The proof of the case
$\gamma_2<\alpha_2$ is analogous.
\end{proof}

\begin{lemma}\label{lemma-element}
Let $r\in\mbn$. Let $u,v\in\mbn$ be such that $r=au+v$. Let
\begin{eqnarray*}
  \varphi=(\varphi_1,\varphi_2,\varphi_3):= \left(u-\left\lfloor
  \frac{v+1}{2}\right\rfloor ,v-2\left\lfloor \frac{v}{2}\right\rfloor
  , \left\lfloor \frac{v}{2}\right\rfloor \right)\in\mbz^3.
\end{eqnarray*}
Then, $\varphi_1a+\varphi_2(a+1)+\varphi_3(a+2)=r$. Moreover,
$\varphi\in\mbn^3$ if and only if $v\leq 2u$. In particular, if
$\varphi\in\mbn^3$, or equivalently $v\leq 2u$, then $r\in\nums$ and
$\varphi\in\fac(r,\nums)$.
\end{lemma}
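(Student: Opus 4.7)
The plan is to verify Lemma~\ref{lemma-element} by direct calculation, with the parity of $v$ absorbed into two small arithmetic identities. The first identity is $\varphi_2 + 2\varphi_3 = v$, which is nothing but Euclidean division of $v$ by $2$: $\varphi_2 = v - 2\lfloor v/2\rfloor$ is the remainder and $\varphi_3 = \lfloor v/2\rfloor$ the quotient. The second is $\lfloor (v+1)/2\rfloor = \varphi_2 + \varphi_3$, verified in the two parities: if $v$ is even both sides equal $v/2$, and if $v$ is odd both sides equal $(v+1)/2$. Combining the second identity with the definition of $\varphi_1$ yields $\varphi_1 + \varphi_2 + \varphi_3 = u$.

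With these two identities in hand, the first assertion is a one-line computation:
$$\varphi_1 a + \varphi_2(a+1) + \varphi_3(a+2) = a(\varphi_1 + \varphi_2 + \varphi_3) + (\varphi_2 + 2\varphi_3) = au + v = r.$$

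For the equivalence $\varphi \in \mbn^3 \iff v \leq 2u$, I would note that $\varphi_2 \in \{0,1\}$ and $\varphi_3 = \lfloor v/2\rfloor \geq 0$ are automatic; hence only the non-negativity of $\varphi_1$ is at issue, namely $u \geq \lfloor (v+1)/2\rfloor$. For even $v$ this reads $u \geq v/2$, i.e., $v \leq 2u$. For odd $v$ it reads $2u \geq v+1$, which is equivalent to $v \leq 2u$ because $v$ odd and $2u$ even rule out the case $v = 2u$. The concluding ``in particular'' clause is then immediate: a non-negative triple satisfying $\varphi_1 a + \varphi_2(a+1) + \varphi_3(a+2) = r$ is by definition an element of $\fac(r,\nums)$, which in turn forces $r \in \nums$.

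The computation presents no genuine obstacle; the only step requiring a shade of care is the parity subtlety in the equivalence $u \geq \lfloor (v+1)/2\rfloor \iff v \leq 2u$ for odd $v$, and it is resolved by the mismatch in parities between $v$ and $2u$.
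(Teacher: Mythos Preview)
Your proof is correct and follows essentially the same approach as the paper's: a direct computation for the first assertion (the paper uses the identity $\lfloor v/2\rfloor + \lfloor (v+1)/2\rfloor = v$, which is equivalent to your pair of identities), followed by the same parity split to handle the equivalence $u \ge \lfloor (v+1)/2\rfloor \iff v\le 2u$. Your organization via $\varphi_1+\varphi_2+\varphi_3=u$ and $\varphi_2+2\varphi_3=v$ is slightly cleaner, but the argument is the same.
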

\begin{proof}
A simple computation gives
$\varphi_1a+\varphi_2(a+1)+\varphi_3(a+2)=r-a\lfloor (v+1)/2\rfloor
+av-a\lfloor v/2\rfloor$.  Since $\lfloor v/2\rfloor +\lfloor
(v+1)/2\rfloor =v$, we obtain $-a\lfloor (v+1)/2\rfloor +av-a\lfloor
v/2\rfloor =0$ and $\varphi_1a+\varphi_2(a+1)+\varphi_3(a+2)=r$. Note
that $2\lfloor v/2\rfloor \leq v$ and $v-2\lfloor v/2\rfloor \geq
0$. Suppose that $\varphi\in\mbn^3$. So $\lfloor (v+1)/2\rfloor \leq
u$. If $v$ is odd, then $v<v+1\leq 2u$; if $v$ is even, then $v\leq
2u$. In any case, $v\leq 2u$. Conversely, if $v\leq 2u$, then
$(v+1)/2\leq (2u+1)/2$ and $\lfloor (v+1)/2\rfloor \leq \lfloor
(2u+1)/2\rfloor =u$.  Thus, $\varphi\in\mbn^3$. Note that, in this
case, $r\in\nums$ and $\varphi\in\fac(r,\nums)$.
\end{proof}

\begin{remark}
Clearly $r$ might be in $\nums$, whereas $\varphi\not\in\mbn^3$. For
instance, take $a=3$, $\nums=\se 3,4,5\sd$ and $r=9$. Then, $r=au+v$,
with $u=1$ and $v=6$ and $\varphi=(u-\lfloor (v+1)/2\rfloor
,v-2\lfloor v/2\rfloor ,\lfloor v/2\rfloor )=(-2,0,3)\not\in\mbn^3$.
\end{remark}

Let us recall and introduce some new notations. 

\begin{notation}\label{notation-ell}
Let $r\in\mbn$. Let 
\begin{itemize}
\item $\ell_r:=\lfloor r/a\rfloor$, which is the quotient of the
  Euclidian division in $\mbn$ of $r$ by $a$;
\item $\rem_r:=r-a\ell_r$ be the remainder of the Euclidian division
  in $\mbn$ of $r$ by $a$; we will also denote this remainder by $r\bmod a$.
\end{itemize}
Thus, $r=a\ell_r+\rem_r$. By Theorem~\ref{theolength}, if
$r\in\nums\cap\llbracket 0,\la)\!)$, then $\setl(r,\nums)=\{\ell_r\}$.
In other words, $\ell_r$ is the length of any factorization of $r$,
when $r\in\nums\cap\llbracket 0,\la)\!)$. Let
\begin{itemize}
\item $\phi_r:=(\phi_{r,1},\phi_{r,2},\phi_{r,3}):=
  \left(\ell_r-\left\lfloor \frac{\rem_r+1}{2}\right\rfloor ,
  \rem_r-2\left\lfloor \frac{\rem_r}{2}\right\rfloor ,\left\lfloor
  \frac{\rem_r}{2} \right\rfloor \right)\in\mbz^3.$
\end{itemize}
We denote $\phi_r$ the \emph{seed} vector of $r$. Note that
$\phi_{r,2}+2\phi_{r,3}=\rem_r$ and
$\phi_{r,1}+\phi_{r,2}+\phi_{r,3}=\ell_r$.

By the previous lemma, $\phi_r\in\mbn^3$ if and only if $\rem_r\leq
2\ell_r$ and, in such a case, $r\in\nums$ and
$\phi_r\in\fac(r,\nums)$.

Suppose now that $\phi_r\in\mbn^3$, so $r\in\nums$ and
$\phi_r\in\fac(r,\nums)$. In this setting, $\phi_r$ is also called the
\emph{seed factorization} of $r$. We also denote:
\begin{itemize}
\item $\kappa_r:=\min(\phi_{r,1},\phi_{r,3})$,
  $\xi_r:=\max(\phi_{r,1},\phi_{r,3})$,
  $\iota_r:=\phi_{r,2}+|\phi_{r,1}-\phi_{r,3}|$ and
  $c_{r}:=\phi_{r,3}-\phi_{r,1}$.
\end{itemize}
Note that $\kappa_r$, $\xi_r$ and $\iota_r\in\mbn$ and $c_r\in\mbz$.
Moreover, $c_r=\rem_r-\ell_r$.
\begin{itemize}
\item $\mcb_r:=\{\mon^{\phi_r+j\omega} :  0\leq j\leq\kappa_r\}=
  \{x^{\phi_{r,1}-j}y^{\phi_{r,2}+2j}z^{\phi_{r,3}-j}  :  0\leq
  j\leq\kappa_r\}$ (ordered subset).
\end{itemize}
Recall that, for any $r\in\nums$,
\begin{itemize}
\item $W_r:=\se \mon^{\alpha} :  \alpha\in\fac(r,\nums)\sd$ and
  $\delta_r:=\dim W_r$, where $\delta_r\geq 1$.
\end{itemize}
Given $i\in\mbn$, let $\Gamma_i$ be defined as
\begin{itemize}
\item $\Gamma_0:=\{0\}$, for $i=0$; $\Gamma_1:=\{-1,0,1\}$, for $i=1$,
  and $\Gamma_{i}:=\{-i,-i+1,i-1,i\}$, for $i\geq 2$.
\end{itemize}
We claim that $c_r\in\Gamma_{\iota_r}$. Indeed, since
$\phi_{r,2}=\rem_r-2\lfloor \rem_r/2\rfloor$, if $\rem_r$ is even,
$\phi_{r,2}=0$, and if $\rem_r$ is odd, $\phi_{r,2}=1$. Since
$\iota_r=\phi_{r,2}+|\phi_{r,1}-\phi_{r,3}|$, if $\iota_r=0$, we
deduce $\phi_{r,2}=0$, $\phi_{r,1}=\phi_{r,3}$ and
$c_{r}=\phi_{r,3}-\phi_{r,1}=0$, so $c_{r}\in\Gamma_0$. Suppose that
$\iota_r=1$. Then, either $\phi_{r,2}=1$, $\phi_{r,1}=\phi_{r,3}$, and
$c_{r}=\phi_{r,3}-\phi_{r,1}=0$, or else, $\phi_{r,2}=0$,
$|\phi_{r,1}-\phi_{r,3}|=1$, and
$c_{r}=\phi_{r,3}-\phi_{r,1}\in\{-1,1\}$. In any case,
$c_{r}\in\Gamma_1$. Finally, suppose that $\iota_r\geq 2$.  If
$\phi_{r,2}=1$, then $|\phi_{r,1}-\phi_{r,3}|=\iota_r-1$ and
$c_{r}=\phi_{r,3}-\phi_{r,1}\in\{-\iota_r+1,\iota_r-1\}$. If
$\phi_{r,2}=0$, then $|\phi_{r,1}-\phi_{r,3}|=\iota_r$ and
$c_{r}=\phi_{r,3}-\phi_{r,1}\in\{-\iota_r,\iota_r\}$. We conclude that
$c_{r}\in\Gamma_{\iota_r}$.
\end{notation}

\section{The membership problem and the set of
  factorizations}\label{sec-length}

Let $\nums=\se a,a+1,a+2\sd$ be the numerical semigroup generated by
$a,a+1,a+2$, where $a\in\mbn$, $a\geq 3$.  In this section we first
study the membership problem in terms of $\ell_r$ and the seed vector
$\phi_r$. See \cite[Corollary~2]{rg}, for a resolution of the
membership problem for numerical semigroups generated by intervals in
terms of $\lfloor r/a\rfloor$, and \cite[Theorem~2]{ag}, for a
description of the full set of factorizations of an embedding
dimension three numerical semigroup.

\begin{theorem}\label{theo-length}
Let $r\in\mbn$. The following conditions are equivalent:
\begin{eqnarray*}
(i)\phantom{+} r\in\nums;\phantom{+} (ii)\phantom{+} \rem_r\leq
  2\ell_r;\phantom{+} (iii)\phantom{+} \phi_r\in\mbn^3;\phantom{+}
  (iv)\phantom{+}\mbox{there exist }u,v\in\mbn,\; v\leq 2u,\; r=au+v.
\end{eqnarray*}
In such a case, $\phi_r\in\fac(r,\nums)$. Suppose that 
$r\in\nums\cap\llbracket 0,\la)\!)$, so $\setl(r,\nums)=\{\ell_r\}$.
\begin{itemize}
\item[$(1)$] If $u,v\in\mbn$ are such that $v\leq 2u$ and
$r=au+v$, then $(u,v)=(\ell_r,\rem_r)$;
\item[$(2)$] $\fac(r,\nums)=\{\phi_r+j\omega :  0\leq
  j\leq\kappa_r\}$;
\item[$(3)$] $\mcb_r=\{\mon^{\phi_r+j\omega} :  0\leq
  j\leq\kappa_r\}$ is a basis of $W_r$. In particular,
  $\delta_r=\kappa_r+1$.
\item[$(4)$] $\ell_r=2(\delta_r-1)+\iota_r$. In particular, $\ell_r$
  and $\iota_r$ have the same parity and $0\leq \iota_r\leq\ell_r$.
\end{itemize}
\end{theorem}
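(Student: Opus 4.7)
The plan is to split the statement into three pieces: the four equivalences $(i)$--$(iv)$, the unique-seed claim that allows $(1)$--$(2)$, and the dimension/parity consequences $(3)$--$(4)$.

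For the equivalences, the remark recorded just before the theorem (``by the previous lemma, $\phi_r\in\mbn^3$ if and only if $\rem_r\leq 2\ell_r$'') already delivers $(ii)\Leftrightarrow(iii)$, and the last line of Lemma~\ref{lemma-element} gives $(iii)\Rightarrow(i)$. To close the circle I would prove $(i)\Rightarrow(iv)$ by picking any $\alpha\in\fac(r,\nums)$ and reading off $u:=|\alpha|$, $v:=\alpha_2+2\alpha_3$, for which $v=\alpha_2+2\alpha_3\leq 2(\alpha_2+\alpha_3)\leq 2u$ and $au+v=r$. Then $(iv)\Rightarrow(ii)$ is a single Euclidean-division step: given $r=au+v$ with $v\leq 2u$, write $v=qa+\rho$ with $0\leq\rho<a$ to get $\ell_r=u+q$, $\rem_r=\rho$, and the bound $qa+\rho\leq 2u$ forces $\rho\leq 2u\leq 2\ell_r$.

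For the second half, where $r\in\nums\cap\llbracket 0,\la)\!)$ so that unique-length $\setl(r,\nums)=\{\ell_r\}$ is available from Theorem~\ref{theolength}, the central idea is that any two factorizations with the same length and the same value of $\alpha_2+2\alpha_3$ must differ by an integer multiple of $\omega=(-1,2,-1)$, which spans the integer kernel of these two linear functionals. Item $(1)$ then follows by feeding $(u,v)$ into Lemma~\ref{lemma-element} to produce an explicit factorization of length $u$, and deducing $u=\ell_r$ by unique-length and hence $v=r-au=\rem_r$. For item $(2)$ the same kernel observation writes $\alpha=\phi_r+j\omega$ for some $j\in\mbz$; the range $0\leq j\leq\kappa_r$ is then forced by coordinatewise non-negativity, using the small but crucial point that $\phi_{r,2}\in\{0,1\}$ (so that $\phi_{r,2}+2j\geq 0$ already forbids $j<0$). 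The same bound could also be extracted from Proposition~\ref{prop-omega}$(4)$, which is essentially the present computation rewritten for a pair of factorizations.

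Items $(3)$ and $(4)$ are then short. For $(3)$, the monomials $\mon^{\phi_r+j\omega}$ with $0\leq j\leq\kappa_r$ are pairwise distinct (they differ in the first coordinate) and hence linearly independent in $W_r$; by $(2)$ they exhaust $\fac(r,\nums)$ and so form a basis, yielding $\delta_r=\kappa_r+1$. For $(4)$ I would just compute $\ell_r=\phi_{r,1}+\phi_{r,2}+\phi_{r,3}=2\kappa_r+|\phi_{r,1}-\phi_{r,3}|+\phi_{r,2}=2\kappa_r+\iota_r=2(\delta_r-1)+\iota_r$, from which the equal parity of $\ell_r$ and $\iota_r$ and the range $0\leq\iota_r\leq\ell_r$ are immediate. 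I do not anticipate a genuine obstacle anywhere; the only step that is not completely automatic is the lower bound $j\geq 0$ in $(2)$, which rests on $\phi_{r,2}$ taking only the values $0$ or $1$, a consequence of its definition as $\rem_r\bmod 2$.
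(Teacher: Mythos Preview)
Your proposal is correct, and for items $(2)$--$(4)$ it follows the paper almost verbatim. The two places where you diverge are both mild simplifications of what the paper does.

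For the equivalences, the paper closes the cycle by proving $(i)\Rightarrow(ii)$ directly, and this requires a case split: when $r<\la$ it uses Theorem~\ref{theolength} to force $|\alpha|=\ell_r$ and then reads off $\rem_r=\alpha_2+2\alpha_3\leq 2\ell_r$, while for $r\geq\la$ it invokes items $(8)$ and $(11)$ of Lemma~\ref{lemma-length} to conclude $\lfloor a/2\rfloor<\ell_r$ and hence $\rem_r<a\leq 2\ell_r$. Your route $(i)\Rightarrow(iv)\Rightarrow(ii)$ sidesteps this split entirely: taking $u=|\alpha|$ and $v=\alpha_2+2\alpha_3$ works for any $r\in\nums$ without knowing that $|\alpha|=\ell_r$, and the Euclidean step $(iv)\Rightarrow(ii)$ is uniform. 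This is a genuine economy.

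For item $(1)$, the paper builds $\varphi$ from $(u,v)$ via Lemma~\ref{lemma-element}, then compares the second coordinates $\phi_{r,2},\varphi_2\in\{0,1\}$ and uses Proposition~\ref{prop-omega} to rule out $\phi_{r,2}\neq\varphi_2$, concluding $\varphi=\phi_r$ and hence $(u,v)=(\ell_r,\rem_r)$. Your argument is shorter: you observe directly that $|\varphi|=u$ (a one-line computation using $\lfloor v/2\rfloor+\lfloor (v+1)/2\rfloor=v$), so unique length gives $u=\ell_r$ and then $v=r-au=\rem_r$. Both arguments rest on the same ingredients; yours just avoids the coordinate comparison.
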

\begin{proof}
Clearly $(ii)$ implies $(iv)$. By Lemma~\ref{lemma-length}, $(ii)$ is
equivalent to $(iii)$, which, in turn, implies $(i)$ and
$\phi_{r}\in\fac(r,\nums)$. Moreover, if $(iv)$ holds, by
Lemma~\ref{lemma-length}, $\varphi=(u-\lfloor (v+1)/2\rfloor
,v-2\lfloor v/2\rfloor ,\lfloor v/2\rfloor )\in\mbn^3$ and
$\varphi_1a+\varphi_2(a+1)+\varphi_3(a+2)=r$, so $r\in\nums$ and
consequently $(iv)$ implies $i$.  Thus, in order to prove the first
part, it remains to show $(i)$ implies $(ii)$. Suppose that $(i)$
holds and that $r<\la$.  Since $r\in\nums$, there exists
$\alpha\in\fac(r,\nums)$. By Theorem~\ref{theolength},
$|\alpha|=\lfloor r/a\rfloor =\ell_r$ and
$r=\alpha_1a+\alpha_2(a+1)+\alpha_3(a+2)=|\alpha|a+\alpha_2+2\alpha_3=
a\ell_r+(\alpha_2+2\alpha_3)$.  Hence, $\rem_r=\alpha_2+2\alpha_3\leq
2(\alpha_1+\alpha_2+\alpha_3)=2|\alpha|=2\ell_r$ and $\rem_r\leq
2\ell_r$. Suppose that $r\geq\la$. In particular, since
$\la>\frob(\nums)$, we deduce $r\in\nums$. Let us show that if
$r\geq\la$, then $(ii)$ always holds. Indeed, by
Lemma~\ref{lemma-length}, $(8)$ and $(11)$, $\lfloor a/2\rfloor
<\lfloor r/a\rfloor$. Thus, if $a$ is even, then $\rem_r<a=2\lfloor
a/2\rfloor <2\lfloor r/a\rfloor =2\ell_r$ and, if $a$ is odd, then
$\rem_r\leq a-1=2\lfloor a/2\rfloor <2\lfloor r/a\rfloor =2\ell_r$, as
well.

Suppose now that $r\in\nums\cap\llbracket 0,\la)\!)$.  In particular
$\phi_r\in\mbn^3$ and $\phi_r\in\fac(r,\nums)$.  Let $u,v\in\mbn$
satisfy $v\leq 2u$ and $r=au+v$. By Lemma~\ref{lemma-element},
$\varphi:=(u-\lfloor (v+1)/2\rfloor ,v-2\lfloor v/2\rfloor ,\lfloor
v/2\rfloor )\in\mbn^3$ and $\varphi\in\fac(r,\nums)$.  Note that
$\phi_{r,2}=\rem_r-2\lfloor \rem_r/2\rfloor$ and $\varphi_2=v-2\lfloor
v/2\rfloor$ are either 1 or 0. If they were distinct, then, by
Proposition~\ref{prop-omega}, $|\phi_{r,2}-\varphi_2|=2j$, for some
$j\geq 1$, a contradiction. Thus, $\phi_{r,2}=\varphi_2$ and, by
Proposition~\ref{prop-omega}, $\phi_r=\varphi$.  Therefore, $v=\rem_r$
and $u=\ell_r$. This proves item $(1)$.

Since $\phi_r\in\mbn^3$, by the definition of $\kappa_r$,
$\phi_r+j\omega\in\mbn^3$, for every $1\leq j\leq\kappa_r$. As
$\phi_r\in\fac(r,\nums)$, it easily follows that $\phi_r+j\omega$ is a
factorization of $r$, for $-ja+j2(a+1)-j(a+2)=0$.  Conversely, let
$\gamma\in\fac(r,\nums)$, $\gamma\neq\phi_r$.  By
Proposition~\ref{prop-omega}, $\gamma_2\neq \phi_{r,2}$ and
$\gamma_2=\phi_{r,2}\pm 2j$, for some $j\geq 1$. Since $\phi_{r,2}=0$
or 1, and $\varphi_2\in\mbn$, we have that, necessarily,
$\gamma_2=\phi_{r,2}+2j$ and $\gamma=\phi_r+j\omega$, for some $1\leq
j\leq\min(\phi_{r,1},\phi_{r,3})=\kappa_r$. This shows item $(2)$.
Clearly, item $(3)$ follows from item $(2)$.

Finally, let us prove item $(4)$. Indeed,
$\ell_r=\phi_{r,1}+\phi_{r,2}+\phi_{r,3}$, where
\[
\phi_{r,1}+\phi_{r,3}=\max(\phi_{r,1},\phi_{r,3})+\min(\phi_{r,1},\phi_{r,3})=
\xi_r+\kappa_r=2\kappa_r+(\xi_r-\kappa_r)=2\kappa_r+|\phi_{r,1}-\phi_{r,3}|.
\]
By item $(3)$, $\kappa_r=\delta_r-1$. Thus,
$\ell_r=2\kappa_r+|\phi_{r,1}-\phi_{r,3}|+\phi_{r,2}=2(\delta_r-1)+\iota_r$.
\end{proof}

\begin{remark}\label{boundhypothesis}
The hypothesis $r<\la$ is essential in Theorem~\ref{theo-length},
$(1)$.  For instance, take $a=10$, so $\la=\lfloor a/2\rfloor
(a+2)=60$, and $r=60$. Then $r=10\cdot 5+10$, $u:=5$, $v:=10$, where
$v\leq 2u$, but $u\neq\ell_r=\lfloor r/a\rfloor =6$ and $v\neq
\rem_r=0$. However, the containment $\fac(r,\nums)\supseteq
\{\phi_r+j\omega : 0\leq j\leq\kappa_r\}$ still holds for any
$r\in\nums$, even without the restriction $r<\la$. Indeed, if
$r\in\nums$, then by the equivalence $(i)$, $(iii)$ of
Theorem~\ref{theo-length}, $\phi_r\in\mbn^3$ and
$\phi_r+j\omega\in\fac(r,\nums)$, $0\leq j\leq\kappa_r$, as shown in
the proof of Theorem~\ref{theo-length}, where there is no necessity of
a bound on $r$.  In particular, $\{\mon^{\phi_r+j\omega} : 0\leq j\leq
\kappa_r\}$ is a set of linearly independent elements of $W_r$.
\end{remark}

Let $L:=\lfloor (a-1)/2\rfloor$. As a corollary of
Theorem~\ref{theo-length} we give a natural partition of $\nums\cap
\llbracket 0,(a+2)L\rrbracket$ in terms of the subsets
$\nums^{\ell}=\{r\in\nums : \setl(r,\nums)=\{\ell\}\}$.

\begin{corollary}\label{cor-length}
Let $\ell\in\mbn$, with $0\leq \ell\leq L$. The following
hold.
\begin{itemize}
\item[$(1)$] $(a+2)L<\la$ and $\nums^{\ell}\subseteq \nums\cap
  \llbracket 0,\la)\!)$.
\item[$(2)$] $\nums^{\ell}=\llbracket a\ell,(a+2)\ell\rrbracket$.
\item[$(3)$] $\nums\cap \llbracket
  0,(a+2)L\rrbracket=\bigcup_{\ell=0}^{L}\nums^{\ell}$, where
  $\nums^{0},\nums^1,\ldots,\nums^L$ are pairwise disjoint.
\end{itemize}
\end{corollary}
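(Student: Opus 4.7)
My plan is first to verify the numerical inequality in (1), then to identify $\nums^\ell$ with the stated interval via Theorem~\ref{theo-length}, and finally to assemble the partition in (3) from those ingredients together with a short estimate bounding the admissible lengths.

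For (1), I split on the parity of $a$. If $a$ is even, then $L=(a-2)/2$ and $\la=(a/2)(a+2)$, so a direct computation gives $\la-(a+2)L=a+2$. If $a$ is odd, then $L=(a-1)/2$ and $\la=((a-1)/2+2)a$, so $\la-(a+2)L=a+1$. Either way $(a+2)L<\la$. For the inclusion $\nums^{\ell}\subseteq\nums\cap\llbracket 0,\la)\!)$, any $r\in\nums^\ell$ admits a factorization of length $\ell$, and Lemma~\ref{lemma-length}(1) then gives $r\leq (a+2)\ell\leq (a+2)L<\la$.

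For (2), one inclusion is immediate from Lemma~\ref{lemma-length}(1): if $r\in\nums^\ell$ then $a\ell\leq r\leq (a+2)\ell$. For the reverse inclusion, let $r\in\llbracket a\ell,(a+2)\ell\rrbracket$ and set $u:=\ell$, $v:=r-a\ell$, so $0\leq v\leq 2u$. The implication $(iv)\Rightarrow(i)$ of Theorem~\ref{theo-length} yields $r\in\nums$. Since $r\leq (a+2)L<\la$ by (1), Theorem~\ref{theolength} gives $\setl(r,\nums)=\{\ell_r\}$, and Theorem~\ref{theo-length}(1) forces $(u,v)=(\ell_r,\rem_r)$; thus $\ell_r=\ell$ and $r\in\nums^\ell$.

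For (3), disjointness is immediate, since any common element of $\nums^\ell$ and $\nums^{\ell'}$ would give $\{\ell\}=\setl(r,\nums)=\{\ell'\}$. The inclusion $\bigcup_\ell\nums^\ell\subseteq\nums\cap\llbracket 0,(a+2)L\rrbracket$ follows from (2), since $(a+2)\ell\leq (a+2)L$. Conversely, any $r\in\nums\cap\llbracket 0,(a+2)L\rrbracket$ satisfies $r<\la$ by (1), whence $r\in\nums^{\ell_r}$ by Theorem~\ref{theolength}; and the elementary identity $a(L+1)-(a+2)L=a-2L$, which equals $2$ or $1$ according to the parity of $a$, rules out $\ell_r\geq L+1$, because that would force $r\geq a(L+1)>(a+2)L$. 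The only places that require any care are the parity split in (1) and this final $a>2L$ estimate, both routine.
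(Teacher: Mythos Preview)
Your proof is correct and follows essentially the same route as the paper: the parity split for the inequality in (1), the use of Lemma~\ref{lemma-length}(1) and Theorem~\ref{theo-length}(1) for the two inclusions in (2), and Theorem~\ref{theolength} to place $r$ in $\nums^{\ell_r}$ for (3). The only cosmetic differences are that the paper bounds $\ell_r\le L$ via $\lfloor r/a\rfloor\le\lfloor (a+2)L/a\rfloor$ rather than your direct estimate $a(L+1)>(a+2)L$, and it phrases the inequality in (1) as a strict comparison rather than computing $\la-(a+2)L$ explicitly.
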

\begin{proof}
If $a$ is even, then $(a+2)L=(a+2)(a-2)/2<(a+2)a/2=\la$, and, if $a$
is odd, then $(a+2)L=(a+2)(a-1)/2<a\lfloor (a-1)/2\rfloor+a<(\lfloor
a/2\rfloor +2)a=\la$.  Let $r\in\nums^{\ell}$. Then, $r\in\nums$ and
every $\alpha\in\fac(r,\nums)$ satisfies $|\alpha|=\ell$. By
Lemma~\ref{lemma-length}, $r\leq |\alpha|(a+2)=(a+2)\ell\leq
(a+2)L<\la$.  This proves item $(1)$.

Let $r\in\nums^{\ell}$, in particular, by item $(1)$, $r\in \nums\cap
\llbracket 0,\la)\!)$.  By Theorem~\ref{theolength},
$\ell=|\alpha|=\lfloor r/a\rfloor =\ell_r$, for all
$\alpha\in\fac(r,\nums)$ and, by Lemma~\ref{lemma-length},
$a\ell_r=|\alpha|a\leq r\leq |\alpha|(a+2)=(a+2)\ell_r$. Thus, $r\in
\llbracket a\ell,(a+2)\ell\rrbracket$. Conversely, let $r\in
\llbracket a\ell,(a+2)\ell\rrbracket$.  Then $r=a\ell+v$, for some
$v\in\mbn$, $0\leq v\leq 2\ell$. By the equivalence between $(i)$ and
$(iv)$ in Theorem~\ref{theo-length}, we deduce $r\in\nums$. Note that
$r\leq (a+2)\ell\leq (a+2)L<\la$.  Thus, $r\in\nums\cap\llbracket
0,\la)\!)$, so $\setl(r,\nums)=\{\ell_r\}$.  Moreover, $r=a\ell+v$,
with $0\leq v\leq 2\ell$. By Theorem~\ref{theo-length},
$(\ell,v)=(\ell_r,\rem_r)$. Thus $\ell=\ell_r$, and since
$\setl(r,\nums)=\{\ell_r\}$, it follows that $r\in\nums^{\ell}$. Hence
$\nums^{\ell}=\llbracket a\ell,(a+2)\ell\rrbracket$.  This proves item
$(2)$.

Let $r\in\nums\cap \llbracket 0,(a+2)L\rrbracket$, in particular, by
item $(1)$, $r\in \nums\cap \llbracket 0,\la)\!)$.  By
Theorem~\ref{theolength}, $r\in\ulf(\nums)$ and
$\setl(r,\nums)=\{\ell_r\}$, so $r\in\nums^{\ell_r}$. By item $(2)$,
$\nums^L=[aL,(a+2)L]$ and $s:=(a+2)L\in\nums^L$. By
Theorem~\ref{theolength} again, $s\in\nums\cap\llbracket 0,\la)\!)$,
$\setl(s,\nums)=\{\ell_s\}$ and $\ell_s=L$.  Dividing $0\leq r\leq s$
by $a$, then $\ell_r=\lfloor r/a\rfloor \leq \lfloor s/a\rfloor
=\ell_s=L$ and
$r\in\nums^{\ell_r}\subset\bigcup_{\ell=0}^{L}\nums^{\ell}$. Conversely,
if $r\in \bigcup_{\ell=0}^{L}\nums^{\ell}$, then $r\in\nums^{\ell}$,
for some $\ell\leq L$. By Lemma~\ref{lemma-length}, $(1)$, and for any
$\alpha\in\fac(r,\nums)$, $r\leq |\alpha|(a+2)=\ell(a+2)\leq
(a+2)L$. Hence $r\in\nums\cap\llbracket 0,(a+2)L\rrbracket$. Note
that, by definition, the $\nums^{0},\nums^1,\ldots,\nums^L$ are
pairwise disjoint.
\end{proof}

Next result gives another characterization of $r$ being in $\nums$,
now in terms of the triple $(\delta_r,\iota_r,c_r)$.

\begin{theorem}\label{theo-dim}
Let $r\in \llbracket 0, \la )\!)$. Then: $r\in\nums$ if and only if
\begin{center}
there exist $d,i,c\in\mbz$, $d\geq 1$,
$i\geq 0$ and $c\in\Gamma_i$, such that $r=(a+1)(2d-2+i)+c$.
\end{center}
In such a case, $(d,i,c)=(\delta_r,\iota_r,c_r)$.  In
particular, $2d-2+i=\ell_r$.
\end{theorem}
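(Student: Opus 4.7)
The plan is to pivot on the identity
\[
r \;=\; a\ell_r+\rem_r \;=\; (a+1)\ell_r+(\rem_r-\ell_r) \;=\; (a+1)\ell_r+c_r,
\]
together with Theorem~\ref{theo-length}. This immediately handles the forward implication. Indeed, if $r\in\nums\cap\llbracket 0,\la)\!)$, then by Theorem~\ref{theo-length}$(4)$ one has $\ell_r=2(\delta_r-1)+\iota_r$, so the identity rewrites as $r=(a+1)(2\delta_r-2+\iota_r)+c_r$. Since $\delta_r\ge 1$, $\iota_r\ge 0$, and $c_r\in\Gamma_{\iota_r}$ (as verified in Notation~\ref{notation-ell}), the triple $(d,i,c):=(\delta_r,\iota_r,c_r)$ witnesses the desired decomposition.

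For the converse, suppose $r=(a+1)(2d-2+i)+c$ with $d\ge 1$, $i\ge 0$, $c\in\Gamma_i$, and $r<\la$. Set $\ell:=2d-2+i\ge 0$; rewriting yields $r=a\ell+(\ell+c)$. The inspection of the three cases in the definition of $\Gamma_i$ gives $|c|\le i$, and since $d\ge 1$ we have $i\le \ell$, so $v:=\ell+c$ satisfies $0\le v\le 2\ell$. The equivalence $(i)\Leftrightarrow(iv)$ of Theorem~\ref{theo-length} then gives $r\in\nums$, and part~$(1)$ of the same theorem (which applies thanks to $r<\la$) yields $(\ell,v)=(\ell_r,\rem_r)$. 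Subtracting, $c=v-\ell=\rem_r-\ell_r=c_r$.

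It remains to prove the uniqueness clause $(d,i,c)=(\delta_r,\iota_r,c_r)$. We already have $c=c_r$, and comparing $\ell_r=2d-2+i$ with $\ell_r=2(\delta_r-1)+\iota_r$ shows $i\equiv \iota_r\pmod 2$. Given this, it suffices to argue $i=\iota_r$, for then $d=\delta_r$ follows. This is the main obstacle, but it reduces to a short finite check on the parity of $\rem_r$ (equivalently, on $\phi_{r,2}\in\{0,1\}$), using the description recorded in Notation~\ref{notation-ell}: one has $|c_r|=\iota_r$ when $\phi_{r,2}=0$ and $|c_r|=\iota_r-1$ when $\phi_{r,2}=1$ (interpreted correctly when $\iota_r\in\{0,1\}$). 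In each case the condition $c_r\in\Gamma_i$ restricts $i$ to at most two consecutive values around $\iota_r$, and the parity constraint then forces $i=\iota_r$. Finally, the identity $2d-2+i=\ell_r$ is noted as the ``in particular'' conclusion.
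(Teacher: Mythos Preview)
Your proposal is correct and follows essentially the same approach as the paper's proof: both directions hinge on rewriting $r=(a+1)\ell_r+c_r$, invoking Theorem~\ref{theo-length} for membership and for the identification $(\ell,v)=(\ell_r,\rem_r)$, and then pinning down $i=\iota_r$ by a parity argument combined with the structure of $\Gamma_i$. The only difference is cosmetic: the paper phrases the uniqueness step via the parity of $c+i$ (equivalently, of $\rem_r$, which determines $\phi_{r,2}$) to read off directly whether $c=\pm i$ or $c=\pm(i-1)$, while you first derive $i\equiv\iota_r\pmod 2$ from $\ell_r=2d-2+i=2\delta_r-2+\iota_r$ and then use $|c_r|\in\{\iota_r,\iota_r-1\}$ together with $c_r\in\Gamma_i$ to trap $i$ between two consecutive integers; the parity then selects $i=\iota_r$. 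Your final paragraph is somewhat compressed (``restricts $i$ to at most two consecutive values around $\iota_r$'' and ``interpreted correctly when $\iota_r\in\{0,1\}$''), but the underlying case check is the same one the paper carries out explicitly, and it goes through.
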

\begin{proof}
Suppose that $r\in\nums\cap\llbracket 0, \la )\!)$. By
Theorem~\ref{theo-length} and Theorem~\ref{theolength},
$\phi_r\in\fac(r,\nums)$ and $|\phi_r|=\ell_r$. So
\begin{eqnarray*}
r=\phi_{r,1}a+\phi_{r,2}(a+1)+
\phi_{r,3}(a+2)=(a+1)\ell_r+(\phi_{r,3}-\phi_{r,1}),
\end{eqnarray*}
where, by Theorem~\ref{theo-length}, $\ell_r=2\delta_r-2+\iota_r$ and,
by Notation~\ref{notation-ell},
$\phi_{r,3}-\phi_{r,1}=c_{r}\in\Gamma_{\iota_r}$. Therefore,
$r=(a+1)(2\delta_r-2+\iota_r)+c_{r}$, with $\delta_r\geq 1$,
$\iota_r\geq 0$ and $c_{r}\in\Gamma_{\iota_r}$.

Conversely, supose that there exist $d,i\in\mbn$, $d\geq 1$, and
$c\in\Gamma_i$, such that $r=(a+1)(2d-2+i)+c$. Set $u:=2d-2+i$ and
$v:=c+u$.  Then, $r=au+v$, where $u=2(d-1)+i\geq 0$, $v=c+u\geq
-i+2(d-1)+i\geq 0$ and $v=c+u\leq i+u\leq 2u$. By
Theorem~\ref{theo-length}, $r\in\nums$. Let us see that
$(d,i,c)=(\delta_r,\iota_r,c_r)$.  Since $r\in\nums$ and $r<\la$, by
Theorem~\ref{theo-length}, $u=\ell_r$ and $v=\rem_r$.  Thus,
$c=v-u=\rem_r-\ell_r=c_r$ and $c=c_r$. In particular,
$\iota_r=\phi_{r,2}+|\phi_{r,1}-\phi_{r,3}|=\phi_{r,2}+|c_r|=\phi_{r,2}+|c|$,
where $\phi_{r,2}=\rem_r-2\lfloor \rem_r/2\rfloor =v-2\lfloor
v/2\rfloor =c+u-2\lfloor (c+u)/2\rfloor$ is either $0$ or $1$. If
$\phi_{r,2}=0$, i.e., $c+u=c+2d-2+i$ is even, then $c+i$ is
even. Since $c\in\Gamma_i$ and by the definition of the $\Gamma_i$,
then, necessarily, $c\not\in\{-i+1,i-1\}$ and $c=\pm i$. So
$i=|c|=\phi_{r,2}+|c_r|=\iota_r$. If $\phi_{r,2}=1$, i.e.,
$c+u=c+2d-2+i$ is odd, then $c+i$ is odd. Since $c\in\Gamma_i$,
necessarily, $c=-i+1,i-1$, when $i\geq 2$, or $c=0$, when
$i=1$. Therefore, $i=1+|c|=\phi_{r,2}+|c_r|=\iota_r$. Since
$2d-2+i=u=\ell_r=2\delta_r-2+\iota_r$ and $i=\iota_r$, then
$d=\delta_r$. Therefore, $(d,i,c)=(\delta_r,\iota_r,c_r)$.  In
particular, $2d-2+i=2\delta_r-2+\iota_r$ which, by
Theorem~\ref{theo-length}, is equal to $\ell_r$.
\end{proof}

Let $D:=\lfloor (a+3)/4\rfloor$. Recall that $L=\lfloor
(a-1)/2\rfloor$.  As a corollary of Theorem~\ref{theo-dim}, we obtain
a natural partition of $\nums\cap \llbracket 0,(a+2)L\rrbracket$ in
terms of $\nums_d=\{r\in\nums : \card(\fac(r,\nums))=d\}$ and
$\nums_{d,i}:=\{(a+1)(2d-2+i)+c : c\in\Gamma_i\}$.

\begin{corollary}\label{cor-dim}
Let $d\in\mbn$, with $1\leq d\leq
D$. Set $I_d:=L+2-2d$. 
\begin{itemize}
\item[$(1)$] Then, $2D\leq L+2$ and $0\leq I_d\leq L$. 
\item[$(2)$] For $i\in \llbracket 0, I_d\rrbracket$, then
  $2d-2+i\in\llbracket 0,L\rrbracket$, $\nums_{d,i}\subseteq
  \llbracket 0,\la)\!)$ and
  $\nums_{d,i}=\nums_d\bigcap\nums^{2d-2+i}$.
\item[$(3)$] $\nums_d\bigcap
  (\bigcup_{\ell=0}^L\nums^{\ell})=\bigcup_{i=0}^{I_d}\nums_{d,i}$.
\item[$(4)$] $\nums\cap \llbracket 0,(a+2)L\rrbracket=
(\bigcup_{d=1}^{D}\nums_d)\bigcap (\bigcup_{\ell=0}^L\nums^{\ell})$.
\end{itemize}
Let $r_d:=(a+1)(2d-2)$. 
\begin{itemize}
\item[$(5)$] Then, $\nums_{d,0}=\{r_d\}$,
  $r_d=\min\nums_d\cap\left(\bigcup_{\ell=0}^L\nums^{\ell}\right)$ and
  $W_{r_d}=\se x^{d-1}z^{d-1},\ldots,y^{2d-2}\sd$.
\item[$(6)$] Suppose that $r\in\nums_{d,1}$. If $c_r=-1$, then $W_r=
  xW_{r_d}$; if $c_r=0$, then $W_r=yW_{r_d}$; if $c_r=1$, then
  $W_r=zW_{r_d}$.
\item[$(7)$] Suppose that $r\in\nums_{d,i}$, for some $2\leq i\leq
  I_d$.  If $c_r=-i$, then $W_r=x^iW_{r_d}$; if $c_r=-i+1$, then
  $W_r=x^{i-1}yW_{r_d}$; if $c_r=i-1$, then $W_r=yz^{i-1}W_{r_d}$; if
  $c_r=i$, then $W_r=z^iW_{r_d}$.
\end{itemize}
\end{corollary}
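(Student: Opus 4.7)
The plan is to reduce each item to the arithmetic parametrization $(\delta_r, \iota_r, c_r)$ of elements of $\nums \cap \llbracket 0, \la)\!)$ provided by Theorems~\ref{theo-length}(4) and~\ref{theo-dim}, together with the partition of $\nums \cap \llbracket 0, (a+2)L \rrbracket$ by length from Corollary~\ref{cor-length}. The overall logic is that $r \in \nums_d \cap \nums^\ell$ (with $\ell \leq L$) is equivalent, via Theorem~\ref{theo-dim}, to $r = (a+1)(2d-2+i) + c$ with $c \in \Gamma_i$ and $i = \ell - 2d + 2$.

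For item (1), $0 \leq I_d$ amounts to $2D \leq L+2$, which I would verify by splitting on $a \bmod 4$; the same case analysis also yields the identity $\lfloor (L+2)/2 \rfloor = D$ needed later for item (4). For (2), any $r = (a+1)(2d-2+i) + c$ with $0 \leq i \leq I_d$ and $c \in \Gamma_i$ satisfies $r \leq (a+1)L + L = (a+2)L < \la$, so Theorem~\ref{theo-dim} applies and forces $(\delta_r, \iota_r, c_r) = (d, i, c)$ and $\ell_r = 2d-2+i$; Theorem~\ref{theolength} then places $r$ in $\nums^{2d-2+i}$. The converse is exactly the same chain of equivalences read backwards. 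For (3), since $\ell_r = 2(\delta_r - 1) + \iota_r \geq 2d - 2$ for $r \in \nums_d$, intersecting $\nums_d$ with $\bigcup_{\ell=0}^{L} \nums^{\ell}$ only picks up the levels $\ell \in \llbracket 2d-2, L \rrbracket$, which by (2) correspond bijectively to $i \in \llbracket 0, I_d \rrbracket$. For (4), Corollary~\ref{cor-length}(3) identifies the LHS with $\bigcup_{\ell=0}^{L} \nums^{\ell}$, so the only point is $\delta_r \leq D$, which follows from $2\delta_r \leq \ell_r + 2 \leq L + 2$ and the identity $\lfloor (L+2)/2 \rfloor = D$.

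For (5), $\Gamma_0 = \{0\}$ gives $\nums_{d,0} = \{r_d\}$ directly, and minimality holds because for $i \geq 1$ any element of $\nums_{d,i}$ is at least $(a+1)(2d-2+i) - i = r_d + ai > r_d$. Writing $r_d = (2d-2)a + (2d-2)$ and applying Notation~\ref{notation-ell} gives $\phi_{r_d} = (d-1, 0, d-1)$, so $\kappa_{r_d} = d-1$; Theorem~\ref{theo-length}(2)--(3) then provides the basis $\mcb_{r_d} = \{x^{d-1-j} y^{2j} z^{d-1-j} : 0 \leq j \leq d-1\}$ of $W_{r_d}$. For (6) and (7), for each admissible $c_r \in \Gamma_i$ I would compute $\phi_r$ via Notation~\ref{notation-ell} and verify that $\phi_r = \phi_{r_d} + \tau$ for an explicit $\tau \in \mbn^3$ whose monomial $\mon^\tau$ is precisely the factor named in the statement. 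Since $\kappa_r = \kappa_{r_d} = d-1$ in every such case, Theorem~\ref{theo-length}(2)--(3) gives $\mcb_r = \mon^\tau \cdot \mcb_{r_d}$ term-by-term, hence $W_r = \mon^\tau W_{r_d}$.

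The obstacles are bookkeeping rather than conceptual: items (6) and (7) together require seven separate seed-factorization calculations (three for $i=1$ and four for each $i \geq 2$). The calculation proceeds uniformly by splitting on the parity of $\rem_r = \ell_r + c_r$: when $\rem_r$ is even one gets $\phi_{r,2} = 0$ and $\mon^\tau$ is a pure $x^p z^s$ monomial, whereas for odd $\rem_r$ one gets $\phi_{r,2} = 1$ and $\mon^\tau$ picks up a single factor of $y$, which matches the enumeration in the statement on the nose.
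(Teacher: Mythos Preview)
Your proposal is correct and follows essentially the same route as the paper: items (1)--(4) are reduced to Theorems~\ref{theo-length} and~\ref{theo-dim} together with Corollary~\ref{cor-length}, and items (5)--(7) are handled by explicitly computing the seed factorization $\phi_r$ in each case and invoking Theorem~\ref{theo-length}(2)--(3). Two small remarks: for item (1) the paper avoids a case split on $a\bmod 4$ by using $2D\le 2\cdot\frac{a+3}{4}$ and then $\lfloor(a+3)/2\rfloor=L+2$ directly; and your minimality argument in (5), bounding every element of $\nums_{d,i}$ below by $r_d+ai$ for $i\ge 1$, is in fact cleaner than the paper's, which routes through the ordering of the intervals $\nums^\ell$.
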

\begin{proof}
Note that $D=\lfloor (a+3)/4\rfloor \leq (a+3)/4$, so $2D\leq (a+3)/2$
and $2D\leq \lfloor (a+3)/2\rfloor =\lfloor (a-1)/2\rfloor +2=L+2$. In
particular, $I_d=L+2-2d\geq L+2-2D\geq 0$. On the other hand, since
$d\geq 1$, we obtain $I_d=L+2-2d\leq L$, which proves item $(1)$.

Let $i\in\llbracket 0,I_d\rrbracket$. Since $d\geq 1$ and $0\leq i\leq
I_d$, we have $0\leq 2d-2+i\leq 2d-2+I_d=L$.

Let $r\in\nums_{d,i}$, where $1\leq d\leq D$ and $i\in\llbracket
0,I_d\rrbracket$. Thus,
\begin{eqnarray*}\label{biggest}
r=(a+1)(2d-2+i)+c\leq (a+1)L+I_{d}\leq (a+1)L+L=(a+2)L.
\end{eqnarray*}
By Corollary~\ref{cor-length}, $r\leq (a+2)L<\la$ and
$\nums_{d,i}\subseteq \nums\cap \llbracket 0,\la)\!)$. By
Theorem~\ref{theolength}, $\setl(r,\nums)=\{\ell_r\}$, so
$r\in\nums^{\ell_r}$. By Theorem~\ref{theo-dim}, if
$r=(a+1)(2d-2+i)+c$, with $d\geq 1$, $i\geq 0$ and $c\in\Gamma_i$,
then $\delta_r=d$, $i=\iota_r$, $c=c_r$ and $\ell_r=2d-2+i$. Hence
$r\in \nums_d\cap\nums^{2d-2+i}$ and $\nums_{d,i}\subseteq
\nums_d\cap\nums^{2d-2+i}$.

Let $r\in\nums_d\cap\nums^{2d-2+i}$, where $i\in\llbracket
0,I_d\rrbracket$, so $2d-2+i\leq L$. By Corollary~\ref{cor-length},
$\nums^{2d-2+i}\subseteq\nums\cap \llbracket 0,\la)\!)$ and, by
Theorem~\ref{theolength}, $\setl(r,\nums)=\{\ell_r\}$. Since
$r\in\nums^{2d-2+i}$, it follows that $\ell_r=2d-2+i$.  Moreover,
$r\in\nums_d$ implies $\delta_r=d$. By Theorems~\ref{theo-dim} and
\ref{theo-length}, $r=(a+1)(2\delta_r-2+\iota_r)+c_r$, where
$2\delta_r-2+\iota_r=\ell_r=2d-2+i$. Thus, $\iota_r=i$ and
$r=(a+1)(2d-2+i)+c_r$, with $c_r\in\Gamma_{\iota_r}=\Gamma_i$. We
conclude that $r\in\nums_{d,i}$, which proves item $(2)$.

Now, let us prove item $(3)$. Suppose that $0\leq \ell<2d-2$, in
particular, $\ell<2D-2\leq L$.  By Corollary~\ref{cor-length},
$\nums^\ell\subseteq\nums\cap\llbracket 0,\la)\!)$. If
$r\in\nums^{\ell}$, then, by Theorem~\ref{theolength},
$\setl(r,\nums)=\{\ell_r\}=\{\ell\}$ and, by Theorem~\ref{theo-dim},
$r=(a+1)\ell_r+c$, where $2\delta_r-2\leq
2\delta_r-2+\iota_r=\ell_r=\ell<2d-2$, so $\delta_r<d$ and
$r\not\in\nums_d$. Therefore, $\nums^{\ell}\cap\nums_d=\emptyset$,
whenever $0\leq \ell<2d-2$. It follows that
\[
\bigcup_{i=0}^{I_d}\nums_{d,i}=\bigcup_{i=0}^{I_d}\nums_{d}\cap
\nums^{2d-2+i}=\nums_d\cap\left(\bigcup_{i=0}^{I_d}\nums^{2d-2+i}\right)=
\nums_d\cap\left(\nums^{2d-2}\cup\ldots\cup\nums^{L}\right)=
\nums_d\cap\left(\bigcup_{\ell=0}^{L}\nums^\ell\right),
\]
which proves $(3)$. 

Let $r\in\nums\cap \llbracket 0,(a+2)L\rrbracket$. By
Corollary~\ref{cor-length}, $\nums\cap \llbracket
0,(a+2)L\rrbracket=\bigcup_{\ell=0}^L\nums^{\ell}$ and, in particular,
$r<\la$. So, $r\in\nums^{\ell}$, for some $0\leq\ell\leq L$, and
$\ell_r=\ell\leq L$. By Theorem~\ref{theo-dim}, $r$ can be written as
$r=(a+1)(2d-2+i)+c$, where $d=\delta_r\geq 1$, $i=\iota_r\in\mbn$,
$c=c_r\in\Gamma_i$ and $2d-2+i=\ell_r$. Therefore, $2d-2\leq
2d-2+i=\ell_r\leq L$ and $d\leq (L+2)/2$, so $d\leq \lfloor
(L+2)/2\rfloor = \lfloor (1/2)\lfloor (a+3)/2\rfloor \rfloor =\lfloor
(a+3)/4\rfloor=D$. Hence, $r\in\bigcup_{d=1}^D\nums_d$ and $\nums\cap
\llbracket 0,(a+2)L\rrbracket\subseteq\bigcup_{d=1}^D\nums_d$. It
follows that
\begin{eqnarray*}
\nums\cap \llbracket 0,(a+2)L\rrbracket=
\left(\bigcup_{d=1}^D\nums_d\right)\cap\left(\nums\cap \llbracket
0,(a+2)L\rrbracket\right)=
\left(\bigcup_{d=1}^D\nums_d\right)\cap\left(\bigcup_{\ell=0}^L\nums^{\ell}\right).
\end{eqnarray*}
This proves item $(4)$.

Set $r_d:=(a+1)(2d-2)$. Clearly, $\nums_{d,0}=\{r_d\}$. By
Corollary~\ref{cor-length}, $\nums^0,\ldots,\nums^{L}$ are pairwise
disjoint, where $\nums^{\ell}=\llbracket
a\ell,(a+2)\ell\rrbracket$. If $\ell<L=\lfloor (a-1)/2\rfloor\leq
a/2$, then $(a+2)\ell<a(\ell+1)$. Using that
$\nums_d\cap\nums^{\ell}=\emptyset$, for $0<\ell<2d-2$, and that
$\nums_{d,i}=\nums_d\cap \nums^{2d-2+i}$, it follows that
\begin{eqnarray*}
\min \nums_d\cap\left(\bigcup_{\ell=0}^L\nums^{\ell}\right)= \min
\bigcup_{\ell=2d-2}^L\left(\nums_d\cap
\nums^{\ell}\right)=\min\nums_d\cap \nums^{2d-2}= \min\nums_{d,0}=r_d.
\end{eqnarray*} 
By Theorem~\ref{theo-dim}, $\ell_r=2d-2$ and $\rem_r=c_r+\ell_r=2d-2$.
So $\phi_{r_d}=(d-1,0,d-1)$. By Theorem~\ref{theo-length},
$\fac(r_d,\nums)=\{(d-1,0,d-1)+j\omega : 0\leq j\leq d-1\}$ and
$W_{r_d}=\se x^{d-1}z^{d-1},\ldots,y^{2d-2}\sd$, which proves item
$(5)$.

Suppose that $r\in\nums_{d,1}$. If $c_r=-1$, then $r=(a+1)(2d-1)-1$,
$\ell_r=2d-1$ and $\rem_r=c_r+\ell_r=2d-2$. Therefore,
$\phi_{r}=(d,0,d-1)$, $\fac(r,\nums)=\{(d,0,d-1)+j\omega : 0\leq j\leq
d-1\}$ and $W_{r}=xW_{r_d}$.

If $c_r=0$, then $r=(a+1)(2d-1)$, $\ell_r=2d-1$ and
$\rem_r=c_r+\ell_r=2d-1$. Thus $\phi_{r}=(d-1,1,d-1)$,
$\fac(r,\nums)=\{(d-1,1,d-1)+j\omega : 0\leq j\leq d-1\}$ and
$W_{r}=yW_{r_d}$.

If $c_r=1$, then $r=(a+1)(2d-1)+1$, $\ell_r=2d-1$ and
$\rem_r=c_r+\ell_r=2d$; consequently $\phi_{r}=(d-1,0,d)$,
$\fac(r,\nums)=\{(d-1,0,d)+j\omega :  0\leq j\leq d-1\}$ and
$W_{r}=zW_{r_d}$. This proves item $(6)$.

Suppose that $r\in\nums_{d,i}$, for $2\leq i\leq I_{d}$. If $c_r=-i$,
then $r=(a+1)(2d-2+i)-i$, $\ell_r=2d-2+i$ and
$\rem_r=c_r+\ell_r=2d-2$. Hence,
$\fac(r,\nums)=\{(d-1+i,0,d-1)+j\omega : 0\leq j\leq d-1\}$ and
$W_{r}=x^iW_{r_d}$.

If $c_r=-i+1$, then $r=(a+1)(2d-2+i)-i+1$, $\ell_r=2d-2+i$ and
$\rem_r=c_r+\ell_r=2d-1$. In this setting,
$\fac(r,\nums)=\{(d-2+i,1,d-1)+j\omega :  0\leq j\leq d-1\}$ and
$W_{r}=x^{i-1}yW_{r_d}$.

If $c_r=i-1$, then $r=(a+1)(2d-2+i)+i-1$, $\ell_r=2d-2+i$ and
$\rem_r=c_r+\ell_r=2d-3+2i$. Hence,
$\fac(r,\nums)=\{(d-1,1,d-2+i)+j\omega :  0\leq j\leq d-1\}$ and
$W_{r}=yz^{i-1}W_{r_d}$.

If $c_r=i$, then $r=(a+1)(2d-2+i)+i$, $\ell_r=2d-2+i$ and
$\rem_r=c_r+\ell_r=2d-2+2i$. In this case, 
$\fac(r,\nums)=\{(d-1,0,d-1+i)+j\omega :  0\leq j\leq d-1\}$ and
$W_{r}=z^iW_{r_d}$, which concludes the proof.
\end{proof}

We finish the section with some examples. 

\begin{example}
Let $a\in\mbn$, $a\geq 3$, $L=\lfloor (a-1)/2\rfloor$, $D=\lfloor
(a+3)/4\rfloor$ and $\nums=\se a,a+1,a+2\sd$. In Figures~\ref{fig:a10}
and \ref{fig:a15} we display the triples $(r,\iota_r,c_r)$ of elements
$r$ in $\nums\cap\llbracket 0,(a+2)L\rrbracket$, for $a=10$ and for
$a=15$, respectively, organized in an $L\times D$ table. Concretely,
for an $\ell\in\mbn$, with $0\leq \ell\leq L$, the subset $S^{\ell}$
of elements $r$ of $\nums$ whose factorizations have length $\ell$ are
represented in the row labeled by $\ell$. For a $d\in\mbn$, $1\leq
d\leq D$, the subset $\nums_d$ of elements $r$ of $\nums$ with
denumerant $d$ are represented in the column $d$. The intersection of
the row labeled by $\ell$ and the column labeled by $d$ is precisely
the subset $\nums_{d,i}$, where $i=\ell+2-2d$.

When $a=10$, then $L=\left\lfloor \frac{a-1}{2}\right\rfloor =4$ and
$D=\left\lfloor \frac{a+3}{4}\right\rfloor =3$; the smallest
$r\in\nums$ with two factorizations of different lengths is $60=5\cdot
12=6\cdot 10$. When $a=15$, then $L=\left\lfloor
\frac{a-1}{2}\right\rfloor =7$ and $D=\left\lfloor
\frac{a+3}{4}\right\rfloor =4$; the smallest $r\in\nums$ with two
factorizations of different lengths is $135=16+7\cdot 17=9\cdot
15$. See Lemma~\ref{lemma-length}, $(8)$ and $(11)$. Both numbers, 60
and 135, would appear in the subsequent row.

\begin{figure}
    \centering
{\scriptsize    $\begin{array}{|c|c|c|c|}\hline
&d=1&d=2&d=3
\\ &
\begin{array}{crr}r&\iota_r&c_r\end{array}&
\begin{array}{crr}r&\iota_r&c_r\end{array}&
\begin{array}{crr}r&\iota_r&c_r\end{array}
\\\hline \ell=0&
\begin{array}{crr}0&0&0\end{array}&&
\\\hline \ell=1&
\begin{array}{crr}
{\cred 10}&{\cred 1}&{\cred \text{-}1}\\
{\cgreen 11}&{\cgreen 1}&{\cgreen 0}
\\{\cblue 12}&{\cblue 1}&{\cblue 1}\end{array}&&
\\\hline \ell=2&
\begin{array}{crr}
{\corange 20}&{\corange 2}&{\corange\text{-}2}\\
{\cteal 21}&{\cteal 2}&{\cteal\text{-}1}\\
{\colive 23}&{\colive 2}&{\colive 1}\\
{\cviolet 24}&{\cviolet 2}&{\cviolet 2}\end{array}&
\begin{array}{crr}22&0&0\end{array}&
\\\hline \ell=3&
\begin{array}{crr}
{\corange 30}&{\corange 3}&{\corange \text{-}3}\\
{\cteal 31}&{\cteal 3}&{\cteal \text{-}2}\\
{\colive 35}&{\colive 3}&{\colive 2}\\
{\cviolet 36}&{\cviolet 3}&{\cviolet 3}
\end{array} &
\begin{array}{crr}
{\cred  32}&{\cred 1}&{\cred \text{-}1}\\
{\cgreen 33}&{\cgreen 1}&{\cgreen 0}\\
{\cblue 34}&{\cblue 1}&{\cblue 1}\end{array}&
\\\hline \ell=4&
\begin{array}{crr}
{\corange 40}&{\corange 4}&{\corange \text{-}4}\\
{\cteal 41}&{\cteal 4}&{\cteal \text{-}3}\\
{\colive 47}&{\colive 4}&{\colive 3}\\
{\cviolet 48}&{\cviolet 4}&{\cviolet 4}\end{array} &
\begin{array}{lrr}
{\corange 42}&{\corange 2}&{\corange \text{-}2}\\
{\cteal 43}&{\cteal 2}&{\cteal \text{-}1}\\
{\colive 45}&{\colive 2}&{\colive 1}\\
{\cviolet 46}&{\cviolet 2}&{\cviolet 2}\end{array} & 
\begin{array}{crr}44&0&0\end{array}
\\\hline 
\end{array}$}
\caption{The set of triples $(r,\iota_r,c_r)$, for $r\in\nums\cap
  \llbracket 0,(a+2)L\rrbracket$, when $a=10$. Thus, $\nums=\se
  10,11,12\sd$, $L=\left\lfloor \frac{a-1}{2}\right\rfloor =4$,
  $D=\left\lfloor \frac{a+3}{4}\right\rfloor =3$ and $\llbracket
  0,(a+2)L\rrbracket=\llbracket 0,48\rrbracket$.}
\label{fig:a10}
\end{figure}

\begin{figure}
\centering {\scriptsize $\begin{array}{|c|c|c|c|c|}\hline
    &d=1&d=2&d=3&d=4 \\&
\begin{array}{crr}r&\iota_r&c_r\end{array}&
\begin{array}{crr}r&\iota_r&c_r\end{array}&
\begin{array}{crr}r&\iota_r&c_r\end{array}&
\begin{array}{crr}r&\iota_r&c_r\end{array}    
\\\hline \ell=0&
\begin{array}{crr}0&0&0\end{array}&&&
\\\hline \ell=1&
\begin{array}{crr}
{\cred 15}&{\cred 1}&{\cred \text{-}1}\\
{\cgreen 16}&{\cgreen 1}&{\cgreen 0}
\\{\cblue 17}&{\cblue 1}&{\cblue 1}\end{array}&&&
\\\hline \ell=2&
\begin{array}{crr}
{\corange 30}&{\corange 2}&{\corange\text{-}2}\\
{\cteal 31}&{\cteal 2}&{\cteal\text{-}1}\\
{\colive 33}&{\colive 2}&{\colive 1}\\
{\cviolet 34}&{\cviolet 2}&{\cviolet 2}\end{array}&
\begin{array}{crr}32&0&0\end{array}&&
\\\hline \ell=3&
\begin{array}{crr}
{\corange 45}&{\corange 3}&{\corange \text{-}3}\\
{\cteal 46}&{\cteal 3}&{\cteal \text{-}2}\\
{\colive 50}&{\colive 3}&{\colive 2}\\
{\cviolet 51}&{\cviolet 3}&{\cviolet 3}\end{array} &
\begin{array}{crr}
{\cred  47}&{\cred 1}&{\cred \text{-}1}\\
{\cgreen 48}&{\cgreen 1}&{\cgreen 0}\\
{\cblue 49}&{\cblue 1}&{\cblue 1}\end{array}&&
\\\hline \ell=4&
\begin{array}{crr}
{\corange 60}&{\corange 4}&{\corange \text{-}4}\\
{\cteal 61}&{\cteal 4}&{\cteal \text{-}3}\\
{\colive 67}&{\colive 4}&{\colive 3}\\
{\cviolet 68}&{\cviolet 4}&{\cviolet 4}\end{array} &
\begin{array}{crr}
{\corange 62}&{\corange 2}&{\corange \text{-}2}\\
{\cteal 63}&{\cteal 2}&{\cteal \text{-}1}\\
{\colive 65}&{\colive 2}&{\colive 1}\\
{\cviolet 66}&{\cviolet 2}&{\cviolet 2}\end{array} & 
\begin{array}{crr}64&0&0\end{array}&
\\\hline \ell=5&
\begin{array}{crr}
{\corange 75}&{\corange 5}&{\corange \text{-}5}\\
{\cteal 76}&{\cteal 5}&{\cteal \text{-}4}\\
{\colive 84}&{\colive 5}&{\colive 4}\\
{\cviolet 85}&{\cviolet 5}&{\cviolet 5}\end{array} &
\begin{array}{crr}
{\corange 77}&{\corange 3}&{\corange \text{-}3}\\
{\cteal 78}&{\cteal 3}&{\cteal \text{-}2}\\
{\colive 82}&{\colive 3}&{\colive 2}\\
{\cviolet 83}&{\cviolet 3}&{\cviolet 3}\end{array} &
\begin{array}{crr}
{\cred  79}&{\cred 1}&{\cred \text{-}1}\\
{\cgreen 80}&{\cgreen 1}&{\cgreen 0}\\
{\cblue 81}&{\cblue 1}&{\cblue 1}\end{array}&
\\\hline \ell=6&
\begin{array}{crr}
{\corange 90}&{\corange 6}&{\corange \text{-}6}\\
{\cteal 91}&{\cteal 6}&{\cteal \text{-}5}\\
{\colive 101}&{\colive 6}&{\colive 5}\\
{\cviolet 102}&{\cviolet 6}&{\cviolet 6}\end{array} &
\begin{array}{crr}
{\corange 92}&{\corange 4}&{\corange \text{-}4}\\
{\cteal 93}&{\cteal 4}&{\cteal \text{-}3}\\
{\colive 99}&{\colive 4}&{\colive 3}\\
{\cviolet 100}&{\cviolet 4}&{\cviolet 4}\end{array} &
\begin{array}{crr}
{\corange 94}&{\corange 2}&{\corange \text{-}2}\\
{\cteal 95}&{\cteal 2}&{\cteal \text{-}1}\\
{\colive 97}&{\colive 2}&{\colive 1}\\
{\cviolet 98}&{\cviolet 2}&{\cviolet 2}\end{array}&
\begin{array}{crr}96&0&0\end{array}
\\\hline \ell=7&
\begin{array}{crr}
{\corange 105}&{\corange 7}&{\corange \text{-}7}\\
{\cteal 106}&{\cteal 7}&{\cteal \text{-}6}\\
{\colive 118}&{\colive 7}&{\colive 6}\\
{\cviolet 119}&{\cviolet 7}&{\cviolet 7}\end{array} &
\begin{array}{crr}
{\corange 107}&{\corange 5}&{\corange \text{-}5}\\
{\cteal 108}&{\cteal 5}&{\cteal \text{-}4}\\
{\colive 116}&{\colive 5}&{\colive 4}\\
{\cviolet 117}&{\cviolet 5}&{\cviolet 5}\end{array} &
\begin{array}{crr}
{\corange 109}&{\corange 3}&{\corange \text{-}3}\\
{\cteal 110}&{\cteal 3}&{\cteal \text{-}2}\\
{\colive 114}&{\colive 3}&{\colive 2}\\
{\cviolet 115}&{\cviolet 3}&{\cviolet 3}\end{array}&
\begin{array}{crr}
{\cred 111}&{\cred 1}&{\cred \text{-}1}\\
{\cgreen 112}&{\cgreen 1}&{\cgreen 0}\\
{\cblue 113}&{\cblue 1}&{\cblue 1}\end{array}
\\\hline
\end{array}$}
\caption{The set of triples $(r,\iota_r,c_r)$, for $r\in\nums\cap
  \llbracket 0,(a+2)L\rrbracket$, when $a=15$. Thus, $\nums=\se
  15,16,17\sd$, $L=\left\lfloor\frac{a-1}{2}\right\rfloor=7$,
  $D=\left\lfloor\frac{a+3}{4}\right\rfloor=4$ and $\llbracket
  0,(a+2)L\rrbracket=\llbracket 0,119\rrbracket$.}
\label{fig:a15}
\end{figure}
\end{example}

\begin{remark}
The triple $(r,\iota_r,c_r)$ is written in a specific colour according
to which of the eight sets contains the pair $(\iota_r,c_r)$:
\begin{eqnarray*}
&&\{(0,0)\},{\cred\{(1,-1)\}},{\cgreen\{(1,0)\}},{\cblue\{(1,1)\}},\\&&
{\corange\{(i,-i) :  2\leq i\leq I_d\}},
{\cteal\{(i,-i+1) :  2\leq i\leq I_d\}},
{\colive\{(i,i-1) :  2\leq i\leq I_d\}},
{\cviolet\{(i,i) :  2\leq i\leq I_d\}}.
\end{eqnarray*}
In other words, 
$\nums^{1}$ can be partitioned into three subsets,
$\nums^{2}$ into five,
$\nums^{3}$ into seven,
$\nums^{4}$ into five,
$\nums^{5}$ into seven, and so on. 
\end{remark}

\begin{example}
Let $a\in\mbn$, $a\geq 3$, and $\nums=\se a,a+1,a+2\sd$.
Figure~\ref{fig:mons} shows the $7\times 4$ array of triples
$(W_r,\iota_r,c_r)$, where $r\in \nums$, $\ell_r\in\llbracket
0,7\rrbracket$ and $\delta_r\llbracket 0,4\rrbracket$.

By Corollary~\ref{cor-dim}, the table that displays the elements of
$\nums\cap \llbracket 0,(a+2)L\rrbracket$ has $L=\lfloor
(a-1)/2\rfloor$ rows and $D=\lfloor (a+3)/4\rfloor$ columns. For
instance, for $a\in\{3,4\}$ we must consider the $1\times 1$ top-left
sub-table; for $a\in\{5,6\}$, the $2\times 2$ top-left sub-table; for
$a\in\{7,8\}$, the $3\times 2$ top-left sub-table, and so on. For
$a=10$, we must consider the $4\times 3$ top-left sub-table and, for
$a=15$, the $7\times 4$ top-left sub-table.

Let $(d,i,c)\in\mbz^3$ be a triple, with $1\leq d\leq D$, $0\leq i\leq
I_{d}:=L+2-2d$ and $c\in\Gamma_i$ and let
$r=(a+1)(2d-2+i)+c\in\nums_{d,i}$.  By Corollary~\ref{cor-dim},
$(5)-(7)$, $W_r$ is completely determined in terms of the triple
$(d,i,c)$. Take, for instance, the triple $(d,i,c)=(2,2,-1)$. If
$a=10$, then $L=\lfloor (a-1)/2\rfloor=4$ and $D=\lfloor
(a+3)/4\rfloor=3$, $r=(a+1)(2d-2+i)+c=43$ and $W_{43}=\se
x^2yz,xy^3\sd$; if $a=15$, then $L=\lfloor (b-1)/2\rfloor=7$ and
$D=\lfloor (a+3)/4\rfloor=4$, $r=(a+1)(2d-2+i)+c=63$ and $W_{63}=\se
x^2yz,xy^3\sd$, as well.

\begin{figure}
    \centering
{\scriptsize
$\begin{array}{|c|c|c|c|c|}\hline
&d=1&d=2&d=3&d=4
\\&
\begin{array}{crr}W_r&\iota_r&c_r\end{array}&
\begin{array}{crr}W_r&\iota_r&c_r\end{array}&
\begin{array}{crr}W_r&\iota_r&c_r\end{array}&
\begin{array}{crr}W_r&\iota_r&c_r\end{array}    
\\\hline \ell=0&
\begin{array}{crr}1&0&0\end{array}&&&
\\\hline \ell=1&
\begin{array}{crr}
{\cred x}&{\cred 1}&{\cred \text{-}1}\\
{\cgreen y}&{\cgreen 1}&{\cgreen 0}
\\{\cblue z}&{\cblue 1}&{\cblue 1}\end{array}&&&
\\\hline \ell=2&
\begin{array}{crr}
{\corange x^2}&{\corange 2}&{\corange\text{-}2}\\
{\cteal xy}&{\cteal 2}&{\cteal\text{-}1}\\
{\colive yz}&{\colive 2}&{\colive 1}\\
{\cviolet z^2}&{\cviolet 2}&{\cviolet 2}\end{array}&
\begin{array}{crr}xz,y^2&0&0\end{array}&&
\\\hline \ell=3&
\begin{array}{crr}
{\corange x^3}&{\corange 3}&{\corange \text{-}3}\\
{\cteal x^2y}&{\cteal 3}&{\cteal \text{-}2}\\
{\colive yz^2}&{\colive 3}&{\colive 2}\\
{\cviolet z^3}&{\cviolet 3}&{\cviolet 3}\end{array} &
\begin{array}{crr}
{\cred  x^2z,xy^2}&{\cred 1}&{\cred \text{-}1}\\
{\cgreen xyz,y^3}&{\cgreen 1}&{\cgreen 0}\\
{\cblue xz^2,y^2z}&{\cblue 1}&{\cblue 1}\end{array}&&
\\\hline \ell=4&
\begin{array}{crr}
{\corange x^4}&{\corange 4}&{\corange \text{-}4}\\
{\cteal x^3y}&{\cteal 4}&{\cteal \text{-}3}\\
{\colive yz^3}&{\colive 4}&{\colive 3}\\
{\cviolet z^4}&{\cviolet 4}&{\cviolet 4}\end{array} &
\begin{array}{crr}
{\corange x^3z,x^2y^2}&{\corange 2}&{\corange \text{-}2}\\
{\cteal x^2yz,xy^3}&{\cteal 2}&{\cteal \text{-}1}\\
{\colive xyz^2,y^3z}&{\colive 2}&{\colive 1}\\
{\cviolet xz^3,y^2z^2}&{\cviolet 2}&{\cviolet 2}\end{array} & 
\begin{array}{crr}x^2z^2,xy^2z,y^4&0&0\end{array}&
\\\hline \ell=5&
\begin{array}{crr}
{\corange x^5}&{\corange 5}&{\corange \text{-}5}\\
{\cteal x^4y}&{\cteal 5}&{\cteal \text{-}4}\\
{\colive yz^4}&{\colive 5}&{\colive 4}\\
{\cviolet z^5}&{\cviolet 5}&{\cviolet 5}\end{array} &
\begin{array}{crr}
{\corange x^4z,x^3y^2}&{\corange 3}&{\corange \text{-}3}\\
{\cteal x^3yz,x^2y^3}&{\cteal 3}&{\cteal \text{-}2}\\
{\colive xyz^3,y^3z^2}&{\colive 3}&{\colive 2}\\
{\cviolet xz^4,y^2z^3}&{\cviolet 3}&{\cviolet 3}\end{array} &
\begin{array}{crr}
{\cred  x^3z^2,x^2y^2z,xy^4}&{\cred 1}&{\cred \text{-}1}\\
{\cgreen x^2yz^2,xy^3z,y^5}&{\cgreen 1}&{\cgreen 0}\\
{\cblue x^2z^3,xy^2z^2,y^4z}&{\cblue 1}&{\cblue 1}\end{array}&
\\\hline \ell=6&
\begin{array}{crr}
{\corange x^6}&{\corange 6}&{\corange \text{-}6}\\
{\cteal x^5y}&{\cteal 6}&{\cteal \text{-}5}\\
{\colive yz^5}&{\colive 6}&{\colive 5}\\
{\cviolet z^6}&{\cviolet 6}&{\cviolet 6}\end{array} &
\begin{array}{crr}
{\corange x^5z,x^4y^2}&{\corange 4}&{\corange \text{-}4}\\
{\cteal x^4yz,x^3y^3}&{\cteal 4}&{\cteal \text{-}3}\\
{\colive xyz^4,y^3z^3}&{\colive 4}&{\colive 3}\\
{\cviolet xz^5,y^2z^4}&{\cviolet 4}&{\cviolet 4}\end{array} &
\begin{array}{crr}
{\corange x^4z^2,x^3y^2z,x^2y^4}&{\corange 2}&{\corange \text{-}2}\\
{\cteal x^3yz^2,x^2y^3z,xy^5}&{\cteal 2}&{\cteal \text{-}1}\\
{\colive x^2yz^3,xy^3z^2,y^5z}&{\colive 2}&{\colive 1}\\
{\cviolet x^2z^4,xy^2z^3,y^4z^2}&{\cviolet 2}&{\cviolet 2}\end{array}&
\begin{array}{crr}x^3z^3,x^2y^2z^2,xy^4z,y^6&0&0\end{array}
\\\hline \ell=7&
\begin{array}{crr}
{\corange x^7}&{\corange 7}&{\corange \text{-}7}\\
{\cteal x^6y}&{\cteal 7}&{\cteal \text{-}6}\\
{\colive yz^6}&{\colive 7}&{\colive 6}\\
{\cviolet z^7}&{\cviolet 7}&{\cviolet 7}\end{array} &
\begin{array}{crr}
{\corange x^6z,x^5y^2}&{\corange 5}&{\corange \text{-}5}\\
{\cteal x^5yz,x^4y^3}&{\cteal 5}&{\cteal \text{-}4}\\
{\colive xyz^5,y^3z^4}&{\colive 5}&{\colive 4}\\
{\cviolet xz^6,y^2z^5}&{\cviolet 5}&{\cviolet 5}\end{array} &
\begin{array}{crr}
{\corange x^5z^2,x^4y^2z,x^3y^4}&{\corange 3}&{\corange \text{-}3}\\
{\cteal x^4yz^2,x^3y^3z,x^2y^5}&{\cteal 3}&{\cteal \text{-}2}\\
{\colive x^2yz^4,xy^3z^3,y^5z^2}&{\colive 3}&{\colive 2}\\
{\cviolet x^2z^5,xy^2z^4,y^4z^3}&{\cviolet 3}&{\cviolet 3}\end{array}&
\begin{array}{crr}
{\cred  x^4z^3,x^3y^2z^2,x^2y^4z,xy^6}&{\cred 1}&{\cred \text{-}1}\\
{\cgreen x^3yz^3,x^2y^3z^2,xy^5z,y^7}&{\cgreen 1}&{\cgreen 0}\\
{\cblue x^3z^4,x^2y^2z^3,xy^4z^2,y^6z}&{\cblue 1}&{\cblue 1}\end{array}
\\\hline
\end{array}$}
\caption{The set of triples $(W_r,\iota_r,c_r)$, for $r\in\nums$, with
$\ell_r\in\llbracket 0,7\rrbracket$ 
and $\delta_r\in\llbracket 0,4\rrbracket$.}
\label{fig:mons}
\end{figure}
\end{example}

\section{A Betti-element perspective}\label{sec-Betti}

Let $\numGen$ be a numerical semigroup minimally generated by
$\{n_1,\dots,n_e\}$, that is, $\{n_1,\dots,n_e\}=\numGen^*\setminus
(\numGen^*+\numGen^*)$, with $\numGen^*=\numGen\setminus\{0\}$. The
map
\begin{equation}\label{eq:fact-hom}
\varphi: \mathbb{N}^e\to\numGen, (a_1,\dots,a_e)\mapsto
a_1n_1+\dots+a_en_e
\end{equation}
is a surjective monoid homomorphism, known as the \emph{factorization
  homomorphism} of $\numGen$, and consequently $\numGen$ is isomorphic
to $\mathbb{N}^e/\ker(\varphi)$, where $\ker(\varphi)=\{(a,b)\in
\mathbb{N}^e\times \mathbb{N}^e : \varphi(a)=\varphi(b)\}$ is the
\emph{kernel congruence} of $\varphi$. Clearly,
$\fac(r,\numGen)=\varphi^{-1}(r)$.

A \emph{presentation} of $\numGen$ is a generating set (as a
congruence) of $\ker(\varphi)$. A presentation is \emph{minimal} if
none of its proper subsets generates $\ker(\varphi)$.

Given $r\in\numGen$, let $\nabla_r$ be the graph whose set of vertices
is $\fac(r,\numGen)$, the set of factorizations of $r$ in $\numGen$,
and two vertices are joined by an edge if they have common support
(that is, the dot product of these two factorizations is not zero). We
say that $r$ is a \emph{Betti element} (or Betti degree) of $\numGen$
if $\nabla_r$ is not connected. The set of Betti elements of a
numerical semigroup is finite (see for instance
\cite[Proposition~66]{ns-ap}). By the construction explained after
\cite[Theorem~10]{ns-ap}, every (minimal) presentation of $\numGen$
can be obtained by taking pairs of factorizations of Betti elements.

Let $\betti(\numGen)$ denote the set of Betti elements of
$\numGen$. Set $\bbetti(\numGen)=\betti(\numGen)\cap \ulf(\numGen)$
and set $\ubetti(\numGen)= \betti(\numGen)\setminus \bbetti(\numGen)$
(B and U standing for \emph{balanced} and \emph{unbalanced}, following
the idea of unbalanced relations in \cite{l-f}).

Recall that for $r\in \numGen\setminus\{0\}$, the \emph{Ap\'ery set}
of $r$ in $\numGen$ is $\operatorname{Ap}(\numGen,r)=\{s\in \numGen :
s-r\not\in \numGen\}$. For $X \subseteq \numGen\setminus\{0\}$, set
$\operatorname{Ap}(\numGen,X)=\bigcap_{x\in
  X}\operatorname{Ap}(\numGen,x)=\numGen\setminus (X+\numGen)$. Notice
that we are allowing $X$ to be empty, in which case
$\operatorname{Ap}(\numGen,\emptyset)=\numGen$.

\begin{theorem}\label{th-appendix}
Let $\numGen$ be a numerical semigroup. Then, 
\begin{eqnarray*}
 \ulf(\numGen)=\operatorname{Ap}(\numGen,\ubetti(\numGen))\end{eqnarray*}
\end{theorem}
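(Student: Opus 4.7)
The plan is to prove both inclusions by contrapositive, exploiting the fact that Betti elements furnish a set of generators (as a congruence) of $\ker(\varphi)$.

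For the inclusion $\operatorname{Ap}(\numGen,\ubetti(\numGen))\subseteq\ulf(\numGen)$, I would fix $r\in\numGen$ with $r\notin\ulf(\numGen)$ and pick $\alpha,\beta\in\fac(r,\numGen)$ with $|\alpha|\ne|\beta|$. Since $(\alpha,\beta)\in\ker(\varphi)$ and $\ker(\varphi)$ is generated as a congruence by the pairs of factorizations of Betti elements (as recalled after \cite[Theorem~10]{ns-ap}), there exists a chain $\alpha=\gamma_0,\gamma_1,\dots,\gamma_n=\beta$ in $\fac(r,\numGen)$ such that, for every $i$, there are a Betti element $b_i\in\betti(\numGen)$, factorizations $\delta_i,\epsilon_i\in\fac(b_i,\numGen)$, and $\rho_i\in\mathbb{N}^e$ with $\gamma_i=\epsilon_i+\rho_i$ and $\gamma_{i+1}=\delta_i+\rho_i$. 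Because $|\alpha|\ne|\beta|$, a telescoping length argument forces some index $i$ with $|\gamma_i|\ne|\gamma_{i+1}|$, whence $|\delta_i|\ne|\epsilon_i|$. Thus $b_i$ has factorizations of distinct lengths, so $b_i\in\betti(\numGen)\setminus\ulf(\numGen)=\ubetti(\numGen)$, and $r-b_i=\varphi(\rho_i)\in\numGen$, yielding $r\notin\operatorname{Ap}(\numGen,\ubetti(\numGen))$.

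For the reverse inclusion $\ulf(\numGen)\subseteq\operatorname{Ap}(\numGen,\ubetti(\numGen))$, again by contrapositive. Suppose $r\notin\operatorname{Ap}(\numGen,\ubetti(\numGen))$, so there is some $u\in\ubetti(\numGen)$ with $r-u\in\numGen$. Since $u\notin\ulf(\numGen)$, pick $\alpha,\beta\in\fac(u,\numGen)$ with $|\alpha|\ne|\beta|$. For any $\rho\in\fac(r-u,\numGen)$, the vectors $\alpha+\rho$ and $\beta+\rho$ both lie in $\fac(r,\numGen)$ and have distinct lengths, so $r\notin\ulf(\numGen)$.

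The principal obstacle is the first direction: one has to invoke Betti-generation of $\ker(\varphi)$ carefully enough to express any two factorizations of $r$ as the endpoints of a path of elementary Betti-swaps $\gamma_i\mapsto\gamma_{i+1}$ taking place entirely within $\fac(r,\numGen)$, so that the length change from $|\alpha|$ to $|\beta|$ can be localized at a single swap coming from an unbalanced $b_i$; once this chain-decomposition is available the length bookkeeping is immediate. It is worth noting the degenerate case $\ubetti(\numGen)=\emptyset$, where $\operatorname{Ap}(\numGen,\emptyset)=\numGen$: the same chain argument shows $|\gamma_i|=|\gamma_{i+1}|$ throughout (every $b_i$ lies in $\ulf(\numGen)$), so $\ulf(\numGen)=\numGen$ and the equality holds trivially.
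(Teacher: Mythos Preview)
Your proposal is correct and follows essentially the same approach as the paper's proof: both directions rely on exactly the same two ingredients, namely the easy ``add a factorization of $r-u$'' argument and the chain decomposition of $\ker(\varphi)$ through Betti-element swaps (citing the same \cite[Theorem~10]{ns-ap}). The only cosmetic difference is that you run the chain argument by contrapositive (localizing a length change at one swap to exhibit an unbalanced $b_i$ dividing $r$), whereas the paper runs it directly (assuming $r\in\operatorname{Ap}(\numGen,\ubetti(\numGen))$ and concluding every $b_i$ along the chain is balanced, hence all lengths agree); these are the same argument read in opposite directions.
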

\begin{proof}
Let $r\in\numGen$. If $r\not\in
\operatorname{Ap}(\numGen,\ubetti(\numGen))$,
then there exists $b\in \ubetti(\numGen)$ such that
$r-b\in \numGen$. As $b\in\ubetti(\numGen)$,
there exists $z,z'\in \fac(b,\numGen)$ such that
$|z|\neq |z'|$. Let $s=r-b\in\numGen$, and take $x\in
\fac(s,\numGen)$. Then,
$r=b+s=\varphi(z)+\varphi(x)=\varphi(z+x)=\varphi(z')+\varphi(x)
=\varphi(z'+x)$, meaning that $z+x,z'+x\in
\fac(r,\numGen)$. As $|z]\neq |z'|$, we deduce that
  $|z+x|=|z|+|x|\neq |z'|+|x|=|z'+x|$, which proves that $r$ has two
  factorizations of different length.

Now, suppose that $r\in\operatorname{Ap}(\numGen,\ubetti(\numGen))$.
If $r$ has only one factorization, then there is nothing to prove (as
a matter of fact, $r\in \operatorname{Ap}(\numGen,\betti(\numGen))$ by
\cite[Corollary~3.8]{isolated}). Let $z$ and $z'$ be two
factorizations of $r$. In light of \cite[Proposition~65]{ns-ap} and
the construction explained right after the proof of
\cite[Theorem~10]{ns-ap}, there exists a sequence $z_1,\dots,z_n\in
\mathbb{N}^e$ such that $z_1=z$, $z_n=z'$ and for all $i\in
\{1,\dots,n-1\}$, $(z_i,z_{i+1})=(x_i+y_i,x_{i+1}+y_i)$ for some
$y_i\in\mathbb{N}^e$ and either $(x_i,x_{i+1})$ or $(x_{i+1},x_i)$ in
a presentation of $\numGen$. In particular, for each $i$, there exists
$b_i\in \betti(\numGen)$ such that $x_i,x_{i+1}\in
\fac(b_i,\numGen)$. Notice that as $(z_i,z_{i+1})\in \ker(\varphi)$,
we deduce that $\{z_1,\dots,z_n\}\subseteq \fac(r,\numGen)$. Let
$s_i=\varphi(y_i)\in\numGen$. Then
$r=\varphi(z_i)=\varphi(x_i+y_i)=\varphi(x_i)+\varphi(y_i)=b_i+s_i$. From
$r\in\operatorname{Ap}(\numGen,\ubetti(\numGen))$, we then deduce that
$b_i\not\in \ubetti(\numGen)$ for all $i$, and consequently $b_i\in
\bbetti(\numGen)$ for all $i\in \{1,\dots,n-1\}$. From
$\bbetti(\numGen)\subseteq \ulf(\numGen)$, it follows that
$|x_i|=|x_{i+1}|$, and so $|z_i|=|z_{i+1}|$ for all $i$, which
ultimately proves that $|z|=|z'|$.
\end{proof}

\begin{example}\label{ex:ed-two}
Let $a$ and $b$ be two coprime positive integers greater than one, and
set $\numGen=\langle a,b\rangle$. It is well known that
$\betti(\numGen)=\{ab\}$ and that $\{((b,0),(0,a))\}$ is a minimal
presentation of $\numGen$ (see for instance
\cite[Example~8.22]{rg}). If follows that
$\betti(\numGen)=\ubetti(\numGen)$, and by Theorem~\ref{th-appendix},
the elements in $\operatorname{Ap}(\numGen,ab)$ are the only elements
in $\numGen$ all of whose factorizations have equal length (actually,
each of these elements has a unique factorization according to
\cite[Corollary~3.8]{isolated}). By \cite[Theorem~14]{fr-two}, with
$u=0$ and $v=a$, $\operatorname{Ap}(\numGen,ab)=\{ \alpha a+\beta b :
\alpha\in \{0,\dots,b-1\}, \beta\in\{0,\dots,a-1\}\}$. Observe that
$\numGen$ is \emph{length-factorial} (all the factorizations of an
element in the monoid have different lengths). The minimal
presentation of $\numGen$ is generated by a single relation, and thus
it is cyclic (see \cite[Theorem~3.1]{l-f}).
\end{example}

\begin{example}
Let $\numGen$ be the submonoid of $\mathbb{N}^2$ generated by
$\{(2,0),(1,1),(0,2)\}$. A minimal presentation of $\numGen$ is
$\{((0,2,0),(1,0,1))\}$. Hence, $\betti(\numGen)=\{(2,2)\}=
\bbetti(\numGen)$. Thus, for every element in $\numGen$, the set of
lengths of its factorizations is a singleton. These monoids are known
in the literature as \emph{half-factorial} monoids. Notice that this
monoid is precisely the set of non-negative integer solutions of
$x+y\equiv 0\pmod 2$; the set of minimal generators (atoms) of our
monoid is contained in a hyperplane (see \cite[Proposition~1]{kl}).
\end{example}

\begin{example}
    Let $\nums=\langle 10,11,12\rangle$. By using the
    \texttt{numericalsgps} \cite{numericalsgps} \texttt{GAP}
    \cite{gap} package we can compute the Betti elements of $\nums$
    and the Ap\'ery set corresponding to unbalanced Betti elements.
\begin{verbatim}
gap> s:=Numericalemigroup(10,11,12);;
gap> BettiElements(s);
[ 22, 60 ]
gap> AperyList(s,60);
[ 0, 61, 62, 63, 64, 65, 66, 67, 68, 69, 10, 11, 12, 73, 74, 75, 76, 77, 
  78, 79, 20, 21, 22, 23, 24, 85, 86, 87, 88, 89, 30, 31, 32, 33, 34, 35, 
  36, 97, 98, 99, 40, 41, 42, 43, 44, 45, 46, 47, 48, 109, 50, 51, 52, 53, 
  54, 55, 56, 57, 58, 59 ]    
\end{verbatim}
For $\nums=\langle 15,16,17\rangle$, we obtain:
\begin{verbatim}
gap> s:=Numericalemigroup(15,16,17);;
gap> BettiElements(s);
[ 32, 135, 136 ]
gap> Intersection(AperyList(s,135),AperyList(s,136));
[ 0, 15, 16, 17, 30, 31, 32, 33, 34, 45, 46, 47, 48, 49, 50, 51, 60, 61, 
  62, 63, 64, 65, 66, 67, 68, 75, 76, 77, 78, 79, 80, 81, 82, 83, 84, 85, 
  90, 91, 92, 93, 94, 95, 96, 97, 98, 99, 100, 101, 102, 105, 106, 107, 
  108, 109, 110, 111, 112, 113, 114, 115, 116, 117, 118, 119, 120, 121, 
  122, 123, 124, 125, 126, 127, 128, 129, 130, 131, 132, 133, 134, 137, 
  138, 139, 140, 141, 142, 143, 144, 145, 146, 147, 148, 149, 154, 155, 
  156, 157, 158, 159, 160, 161, 162, 163, 164, 171, 172, 173, 174, 175,
  176, 177, 178, 179, 188, 189, 190, 191, 192, 193, 194, 205, 206, 207, 
  208, 209, 222, 223, 224, 239 ]    
\end{verbatim}
\end{example}

\begin{remark}
Theorem~\ref{th-appendix} can be stated in a more general setting. It
holds true for any commutative, cancellative, reduced monoid
fulfilling the ascending chain condition on principal ideals. These
monoids are atomic, and a minimal presentation is constructed by using
the same idea of choosing pairs of factorizations of elements whose
graphs are non-connected (see \cite{acpi} for more details). In
particular, Theorem~\ref{th-appendix} works for any submonoid of
$\mathbb{N}^{(I)}$, with $I$ a set of non-negative integers.
\end{remark}

\begin{corollary}\label{cor-minbetti}
Let $\numGen$ be a numerical semigroup. Then $\ulf(\numGen)$ is
finite. If $b=\min(\ubetti(\numGen))$, then $\numGen\cap \llbracket
0,b)\!)\subseteq \ulf(\numGen)$ and $b\not\in\ulf(\numGen)$.
\end{corollary}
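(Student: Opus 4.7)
The plan is to deduce both assertions directly from Theorem~\ref{th-appendix}, which identifies $\ulf(\numGen)$ with the intersection $\operatorname{Ap}(\numGen,\ubetti(\numGen))=\bigcap_{b'\in\ubetti(\numGen)}\operatorname{Ap}(\numGen,b')$. First, I would dispose of the implicit assumption that $\ubetti(\numGen)\neq\emptyset$, so that $b=\min(\ubetti(\numGen))$ is well defined: whenever $\numGen$ has at least two minimal generators $n_1<n_2$, the element $n_1n_2$ already admits the two factorizations of distinct lengths $n_2$ and $n_1$, so $\ulf(\numGen)\neq\numGen$, and, via Theorem~\ref{th-appendix}, $\ubetti(\numGen)\neq\emptyset$.

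With this in hand, finiteness is immediate: for any $b'\in\ubetti(\numGen)$ one has $\ulf(\numGen)\subseteq\operatorname{Ap}(\numGen,b')$, and the Ap\'ery set of a non-zero element in a numerical semigroup is classically finite (indeed, of cardinality $b'$). For the inclusion $\numGen\cap\llbracket 0,b)\!)\subseteq\ulf(\numGen)$, let $r\in\numGen$ with $r<b$; by the minimality of $b$ we have $b\leq b'$ for every $b'\in\ubetti(\numGen)$, so $r-b'<0$ and therefore $r-b'\notin\numGen$. This means $r\in\operatorname{Ap}(\numGen,b')$ for every such $b'$, hence $r\in\ulf(\numGen)$.

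The final statement $b\notin\ulf(\numGen)$ is essentially definitional: by construction $\ubetti(\numGen)=\betti(\numGen)\setminus\bbetti(\numGen)=\betti(\numGen)\setminus\ulf(\numGen)$, so $\ubetti(\numGen)\cap\ulf(\numGen)=\emptyset$, and $b\in\ubetti(\numGen)$ forces $b\notin\ulf(\numGen)$. There is no serious obstacle here; all the substance has been concentrated in Theorem~\ref{th-appendix}, and what remains is routine bookkeeping combining finiteness of Ap\'ery sets with the minimality of $b$.
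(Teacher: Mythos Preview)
Your proof is correct and follows essentially the same route as the paper's: both deduce everything from Theorem~\ref{th-appendix} together with the finiteness of Ap\'ery sets, and both note that $\numGen\cap\llbracket 0,b)\!)\subseteq\operatorname{Ap}(\numGen,\ubetti(\numGen))$. You add a useful extra paragraph justifying that $\ubetti(\numGen)\neq\emptyset$ (so that $b$ exists and the finiteness argument applies), a point the paper leaves implicit; otherwise the arguments coincide.
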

\begin{proof}
The proof follows easily from Theorem~\ref{th-appendix} and the fact
that the Ap\'ery set of an element $r$ in $\numGen$ has exactly $r$
elements \cite[Lemma~1]{ns-ap}. Notice that $\numGen\cap \llbracket
0,b)\!)\subseteq \operatorname{Ap}(\numGen, \ubetti(\numGen))$.
\end{proof}

\begin{example}\label{ex:arith}
Let $a$ be a positive integer and let $d$ be a positive integer
coprime with $a$. Let $\numGen=\langle a,a+d,\dots, a+nd\rangle$ with
$n\le a-1$. Then, $\{a,a+d,\dots,a+nd\}$ is a minimal generating set
of $\numGen$. Let $a=c n+b$, with $c$ a positive integer and $b\in
\{1,\dots,n\}$. Let $\mathbf{e_i}$ be the $i$th row of the identity
matrix; the map $\varphi$ in \eqref{eq:fact-hom}, maps $\mathbf{e}_i$
to $a+(i-1)d$, $i\in\{1,\dots,n+1\}$. According to
\cite[Theorem~1.1]{gss} (via Herzog's correspondence \cite{herzog}),
we know that
\begin{multline}
\label{eq:min-pres-arith}
\rho=\left\{(\mathbf{e}_i+\mathbf{e}_{j+1},
\mathbf{e}_j+\mathbf{e}_{i+1} ) : i\in \{1,\dots,n-1\}, j\in
\{i+1,\dots,n\}\right\}\\ \cup
\left\{((c+d)\mathbf{e}_1+\mathbf{e}_{k-2},c\,\mathbf{e}_{n+1}+\mathbf{e}_{b+k-2}
): k\in \{3,\dots, n+3-b\}\right\},
\end{multline}
is a minimal presentation of $\numGen$. In particular, 
\begin{multline*}
    \betti(\numGen)=\{(a+id)+(a+(j+1)d) : i\in \{0,\dots,n-2\}, j\in
    \{i+1,\dots,n-1\}\}\\ \cup \{ (c+d)a+(a+(k-3)d) : k\in
    \{3,\dots,n+3-b\}\}.
\end{multline*} 

Observe that $(c+d)a+(a+(k-3)d)$ has two factorizations of different
length, say $(c+d)\mathbf{e}_1+\mathbf{e}_{k-2}$ and
$c\,\mathbf{e}_{n+1}+\mathbf{e}_{b+k-2}$ (the length of the first is
$c+d+1$, while the length of the second is $c+1$).  Hence,
$\{(c+d)a+(a+(k-3)d) : j\in \{3,\dots,n+3-b\}\}\subseteq
\ubetti(\numGen)$.

Now, suppose that $(a+id)+(a+(j+1)d)$, with $1\le i<j\le n$, has a
factorization of length greater than two, that is,
$(a+id)+(a+(j+1)d)\in \ubetti(\numGen)$. Then, there exists a chain of
factorizations $z_1,\dots,z_n$ of $(a+id)+(a+(j+1)d)$ such that
$z_1=\mathbf{e}_{i+1}+\mathbf{e}_{j+2}$, $|z_n|>2$, and
$(z_l,z_{l+1})=(x_l+y_l,x_{l+1}+y_l)$ with either $(x_l,x_{l+1})\in
\rho$ or $(x_{l+1},x_l)\in \rho$, and $y_l\in \mbn^{n+1}$ for every
$l\in \{1,\dots,n-1\}$ (we use the same argument employed in the proof
of Theorem~\ref{th-appendix}). It follows that for some $l$ (we take
$l$ minimum with this condition, we have that $|z_l|=2$ and
$|z_{l+1}|>2$; this is because $|z_1|=2$ and none of the $z_l$ can
have length equal to one (that would translate to
$a+td=(a+id)+(a+(j+1)d)$ for some $t$, contradicting that
$\{a,a+d,\dots,a+nd\}$ is a minimal set of generators of
$\numGen$). As $|z_l|=2$ and $z_l=x_l+y_l$, we have $1<|x_l|\le
|z_l|=2$, and so $|x_l|=2$, which forces $y_l=0$. Hence,
$z_{l+1}=x_{l+1}$ and so $|x_{l+1}|>2$.  The only possibility for this
to happen is that $c=1$, $x_l=z_l=\mathbf{e}_{n+1}+\mathbf{e}_{b+k-2}$
and $x_{l+1}=z_{l+1}=(1+d)\mathbf{e}_1+\mathbf{e}_{k-2}$ for some
$k\in \{3,\dots,n+3-b\}$. As $z_l$ and $z_{l+1}$ are factorizations of
$(a+id)+(a+(j+1)d)$, we deduce that
$(a+id)+(a+(j+1)d)=(1+d)a+(a+(k-3)d)\in \ubetti(\numGen)$ and we can
conclude, no matter the value of $c$ is, that
\[
\ubetti(\langle a,a+d,\dots,a+nd\rangle)=\{(c+d)a+(a+(k-3)d) : k\in
\{3,\dots,n+3-b\}\}.
\]
\end{example}

\section{Specializing in numerical semigroups generated by
  three consecutive integers}

Now, let us focus again on the particular case of $\nums=\langle
a,a+1,a+2\rangle$ for some positive integer $a$. The case $a=1$ is
precisely $\nums=\mathbb{N}$, which is \emph{factorial}, that is, all
its elements have a unique factorization. For $a=2$, we have a
particular instance of Example~\ref{ex:ed-two}, and thus $\nums$ is
\emph{length-factorial}.

For the rest of this section we consider $a\ge 3$. In this setting,
$\{a,a+1,a+2\}$ minimally generates $\nums$. Write $a=2c+b$ with $b\in
\mathbb{N}$ and $b\in \{1,2\}$. A minimal presentation of $\nums$ is
given by \eqref{eq:min-pres-arith} with $d=1$ and $n=2$. Under the
standing hypothesis, the only choice for $i$ and $j$ is $i=0$ and
$j=1$, respectively.

If $b=2$, the only possible choice for $k$ is 3, and consequently a
minimal presentation of $\nums$ is
\[\{((0,2,0),(1,0,1)), ((c+2,0,0),(0,0,c+1))\}.\] 
These pairs of factorizations (also known as relations or
relators) correspond to the following identities:
\begin{itemize}
    \item $2(a+1)=a+(a+2)$, and 
    \item $(c+1)(a+2)=(c+2)a$. 
\end{itemize} 

For $b=1$, we have $k\in\{3,4\}$, and so we obtain that
\[\{((0,2,0),(1,0,1)), ((c+1,1,0),(0,0,c+1)), ((c+2,0,0),(0,1,c))\}\]
is a minimal presentation of $\nums$; which encodes the following identities:
\begin{itemize}
\item $2(a+1)=a+(a+2)$,
\item $(c+1)(a+2)=(c+1)a+(a+1)$, and
\item
  $(c+2)a=(a+1)+c(a+2)$.
\end{itemize}
In particular, $\bbetti(\nums)=\{2a\}$, and 
\[
\ubetti(\nums)=\begin{cases} \{\frac{a}2(a+2)\}, \text{if } b=2,\\ 
\{(\frac{a-1}{2}+2)a,(\frac{a-1}{2}+1)(a+2)\}, \text{if } b=1.
\end{cases}
\]
Observe that for $b=1$,
$(c+2)a=(c+1)a+a<(c+1)a+(a+1)=(c+1)(a+2)$. Therefore,
\begin{itemize}
\item if $a$ is even ($b=2$), then
  $\ulf(\nums)=\operatorname{Ap}(\nums,\frac{a}2(a+2))$ and
  $\mathcal{L}_a=\frac{a}2(a+2)$;
\item if $a$ is odd ($b=1$), then
  $\ulf(\nums)=\operatorname{Ap}(\nums,\{(\frac{a-1}{2}+2)a,
  (\frac{a-1}{2}+1)(a+2)\})$ and $\mathcal{L}_a=(\frac{a-1}{2}+2)a$.
\end{itemize}

Next, we give a precise description of
$\operatorname{Ap}(\nums,\ubetti(\nums))$. We distinguish to cases,
depending on the parity of $a$.

\begin{lemma}\label{lem:ap-a-even}
 Let $\nums=\langle a,a+1,a+2\rangle$, with $a$ an even integer
 greater than two. Then, every $r\in\nums$ can be expressed as
 $r=\lambda a+\mu (a+1)+\eta (a+2)$ with $\lambda\in \mathbb{N}$,
 $\mu\in \{0,1\}$ and $\eta\in \{0,1,\dots,a/2-1\}$. Moreover,
 \[\operatorname{Ap}\left(\nums,
 \ubetti(\nums)\right)=\left\{\lambda a+\mu
 (a+1)+\eta (a+2) : \lambda\in \left\{0,\dots,\frac{a}2\right\},
 \mu\in \{0,1\}, \eta\in
 \left\{0,\dots,\frac{a}2-1\right\}\right\}.\]
\end{lemma}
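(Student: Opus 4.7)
The plan is to exploit the minimal presentation of $\nums$ computed earlier in this section, which for $a$ even is generated by the two relations $2(a+1)=a+(a+2)$ and $\tfrac{a}{2}(a+2)=(\tfrac{a}{2}+1)a$, together with the fact that $\ubetti(\nums)=\{\tfrac{a}{2}(a+2)\}$, so that $\operatorname{Ap}(\nums,\ubetti(\nums))$ is a classical single-element Ap\'ery set of cardinality $\tfrac{a(a+2)}{2}$ by \cite[Lemma~1]{ns-ap}.

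I would first prove the normal-form assertion (the first half of the lemma) by a straightforward rewriting procedure: starting from any factorization of $r$, repeatedly apply the first relation to bring the coefficient of $a+1$ into $\{0,1\}$, then apply the second relation (which leaves that coefficient untouched) to bring the coefficient of $a+2$ into $\{0,\dots,a/2-1\}$. Uniqueness of such a representation would follow from reducing the identity $\lambda a+\mu(a+1)+\eta(a+2)=\lambda'a+\mu'(a+1)+\eta'(a+2)$ modulo $a$: both $\mu+2\eta$ and $\mu'+2\eta'$ lie in $\llbracket 0,a-1\rrbracket$, so the congruence is an equality, and a parity argument forces $\mu=\mu'$, hence $\eta=\eta'$ and finally $\lambda=\lambda'$. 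Consequently the set $T$ on the right-hand side of the claimed equality consists of $(a/2+1)\cdot 2\cdot (a/2)=\tfrac{a(a+2)}{2}$ pairwise distinct elements.

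Next, the heart of the argument: show the inclusion $T\subseteq\operatorname{Ap}(\nums,\tfrac{a}{2}(a+2))$. Suppose $r=\lambda a+\mu(a+1)+\eta(a+2)\in T$, and assume for contradiction that $r-\tfrac{a}{2}(a+2)\in\nums$ with normal form $(\lambda',\mu',\eta')$. Putting back $\tfrac{a}{2}(a+2)$ produces the representation $(\lambda',\mu',\eta'+a/2)$ of $r$, which fails the constraint on $\eta$; one application of the second relation rewrites it as the bona fide normal form $(\lambda'+a/2+1,\mu',\eta')$, which by uniqueness must coincide with $(\lambda,\mu,\eta)$, forcing $\lambda\ge a/2+1$ and contradicting $\lambda\le a/2$. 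The equality in the lemma then follows from $|T|=\tfrac{a(a+2)}{2}=|\operatorname{Ap}(\nums,\ubetti(\nums))|$.

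The main subtlety is precisely this final step: it is the uniqueness of the normal form, together with its rigid transformation under the second relation, that yields the sharp dichotomy ``$\lambda\le a/2$ iff $r\in\operatorname{Ap}(\nums,\ubetti(\nums))$''. Everything else---existence of the normal form, the counting, and the matching of cardinalities---is routine bookkeeping once the minimal presentation of $\nums$ is in hand.
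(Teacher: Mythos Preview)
Your proof is correct and rests on the same two pillars as the paper's: the rewriting procedure that produces the normal form $(\lambda,\mu,\eta)$, and the cardinality $\lvert\operatorname{Ap}(\nums,\tfrac{a}{2}(a+2))\rvert=\tfrac{a}{2}(a+2)$. The difference is that you and the paper prove \emph{opposite} inclusions directly. The paper shows $\operatorname{Ap}(\nums,\tfrac{a}{2}(a+2))\subseteq T$ by observing that if the normal form of $r$ has $\lambda\ge \tfrac{a}{2}+1$, then one application of the second relation exhibits $r-\tfrac{a}{2}(a+2)\in\nums$; it then concludes by noting $\lvert T\rvert\le \tfrac{a}{2}(a+2)$, which does not require uniqueness of the normal form. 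You instead prove $T\subseteq\operatorname{Ap}(\nums,\tfrac{a}{2}(a+2))$, and for this you need uniqueness so that the rewritten factorization $(\lambda'+\tfrac{a}{2}+1,\mu',\eta')$ can be compared to $(\lambda,\mu,\eta)$. Your route therefore does a little extra work (the uniqueness argument), but it buys a stronger byproduct: the normal form is genuinely unique, which the paper's proof leaves implicit.
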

\begin{proof}
Set $k=a/2$. We already know that a minimal presentation for
$\nums$ is $$\{((0,2,0),(1,0,1)), ((k+1,0,0),(0,0,k))\},$$ and
$\ubetti(\nums)=\{k(a+2)\}$.
    
Every element $r$ of $\nums$ can be expressed as $r=x_1 a + x_2 (a+1)+
x_3 (a+2)$ with $x_1, x_2,x_3 \in \mathbb{N}$. Notice that if $x_2>2$,
then $x_2(a+1)=(x_2-2)(a+1)+2(a+1)=a+(x_2-2)+(a+2)$ (we are using the
first element in the presentation of $\nums$). By applying this
relation as many times as needed, we can write
$r=y_1a+y_2(a+1)+y_3(a+2)$ with $y_1,y_2\in\mathbb{N}$ and $y_2\in
\{0,1\}$. If $y_3>k$, then $y_3(a+2)=(y_3-k)(a+2)+(k+1)a$ (we are
using the second relator of our minimal presentation). It follows
easily that we can write $r=z_1 a+z_2 (a+1)+z_3(a+2)$, with $z_1\in
\mathbb{N}$, $z_2\in\{0,1\}$ and $z_3\in \{0,1,\dots,k-1\}$.
    
If $r\in \operatorname{Ap}\left(\nums, k(a+2)\right)$, then $z_1<k+1$,
since otherwise $r=(z_1-(k+1))a+(k+1)a+z_2(a+1)+z_3(a+2)=(z_1-(k+1))a+
z_2(a+1)+z_3(a+2)+k(a+2)\not\in \operatorname{Ap}\left(\nums,
k(a+2)\right)$. Hence, $r\in\operatorname{Ap}\left(\nums,
k(a+2)\right)$, forces $z_1\in \{0,1,\dots,k\}$. This implies that
$\operatorname{Ap}(\nums,k(a+2))\subseteq \{\lambda a+\mu (a+1)+\eta
(a+2) : \lambda\in \{0,1,\dots,a/2\}, \mu\in \{0,1\}, \eta\in
\{0,1,\dots,a/2-1\}\}$. We also know that the cardinality of
$\operatorname{Ap}(\nums,k(a+2))$ is $k(a+2)$, and the cardinality of
$\{\lambda a+\mu (a+1)+\eta (a+2) : \lambda\in \{0,1,\dots,a/2\},
\mu\in \{0,1\}, \eta\in \{0,1,\dots,a/2-1\}\}$ is at most
$2\frac{a}2(\frac{a}2+1)=\frac{a}2(a+2)=k(a+2)$, proving in this way
that both sets must be the same.
\end{proof}

For $a$ odd, the description of
$\operatorname{Ap}(\nums,\ubetti(\nums))$ is a bit more tricky. We
will make use of the following characterization of membership to
$\nums$, which is a particular instance of \cite[Corollary~3]{gr} (see
also Theorem~\ref{theo-length}): for every $n\in \mathbb{Z}$,
\begin{equation}\tag{M}
\label{eq:membership}
n\in\nums \text{ if and only if } n\bmod a\le 2\left\lfloor
\frac{n}{a}\right\rfloor.
\end{equation}

\begin{lemma}\label{lem:ap-a-odd}
Let $\nums=\langle a,a+1,a+2\rangle$, with $a$ an odd integer
greater than two. Then,
\[
 \operatorname{Ap}\left(\nums, \ubetti(\nums)\right)= \left\{\lambda
 a+\mu(a+1)+\eta (a+2): \begin{array}{l}\lambda\in\left\{0,\dots,
   \frac{a-1}{2}+1-\mu\right\},\\ \mu\in\{0,1\},
   \eta\in\left\{0,\dots,\frac{a-1}{2}-\mu\right\}\end{array}\right\}.
\]
\end{lemma}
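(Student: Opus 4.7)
The plan is to mirror the proof of Lemma~\ref{lem:ap-a-even}, now bearing in mind that $\ubetti(\nums)$ consists of the two elements $b_1:=(c+2)a$ and $b_2:=(c+1)(a+2)$, where $c=(a-1)/2$. For any $r\in\nums$ I would use the seed factorization of Notation~\ref{notation-ell}: set $s:=\rem_r$, $q:=\ell_r$, and then $\mu:=s\bmod 2$, $\eta:=\lfloor s/2\rfloor$, $\lambda:=q-\mu-\eta$. Thus $r=\lambda a+\mu(a+1)+\eta(a+2)$; the membership criterion \eqref{eq:membership} gives $s\le 2q$, which together with $s\le a-1=2c$ immediately yields $\lambda\ge 0$, $\mu\in\{0,1\}$, and $\eta\le c-\mu$. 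So the constraints on $\mu$ and $\eta$ in the right-hand side of the statement are automatic; the whole game is to show $r\in\operatorname{Ap}(\nums,\ubetti(\nums))$ if and only if, in addition, $\lambda\le c+1-\mu$.

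The heart of the argument is to translate $r-b\in\nums$ for $b\in\ubetti(\nums)$ into a condition on $\lambda$ via \eqref{eq:membership}. Writing $r=qa+s$ with $0\le s<a$, one has $r-(c+2)a=(q-c-2)a+s$; when $q\ge c+2$ this is the Euclidean division by $a$, and \eqref{eq:membership} gives $r-(c+2)a\in\nums$ iff $s\le 2(q-c-2)$, which after substituting $s=\mu+2\eta$ and $q=\lambda+\mu+\eta$ reduces to $\lambda\ge c+2-\mu/2$, i.e., $\lambda\ge c+2$ irrespective of $\mu\in\{0,1\}$. Because $(c+1)(a+2)=(c+2)a+1$, the corresponding decomposition for $b_2$ is $r-(c+1)(a+2)=(q-c-2)a+(s-1)$, and for $s\ge 1$ an analogous analysis yields membership iff $\lambda\ge c+2$ when $\mu=0$ and $\lambda\ge c+1$ when $\mu=1$; the corner case $s=0$, i.e.\ $\mu=\eta=0$, requires borrowing an $a$ and gives the even stronger threshold $\lambda\ge a+2$, which is well above $c+1$ and therefore plays no role in our argument.

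With these thresholds the two inclusions follow. For $\operatorname{Ap}(\nums,\ubetti(\nums))\subseteq\text{RHS}$: if $r$ lies in the Ap\'ery set with seed $(\lambda,\mu,\eta)$, then $r-(c+2)a\notin\nums$ forces $\lambda\le c+1$, and $r-(c+1)(a+2)\notin\nums$ gives the extra restriction $\lambda\le c$ when $\mu=1$, jointly $\lambda\le c+1-\mu$. Conversely, given any $(\lambda,\mu,\eta)$ in the allowed range, set $r:=\lambda a+\mu(a+1)+\eta(a+2)$; the inequality $\mu+2\eta\le 2c<a$ shows $r\bmod a=\mu+2\eta$ and $\lfloor r/a\rfloor=\lambda+\mu+\eta$, so this triple is genuinely the seed of $r$. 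The contrapositives of the computations just made then certify $r-(c+2)a\notin\nums$ and $r-(c+1)(a+2)\notin\nums$ (in each subcase either $r-b<0$ or the Euclidean remainder exceeds twice the quotient), placing $r$ in $\operatorname{Ap}(\nums,\ubetti(\nums))$.

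The main obstacle I anticipate is the bookkeeping for $r-b_2$: the identity $(c+1)(a+2)=(c+2)a+1$ means that subtracting $b_2$ decrements the last digit of the Euclidean expansion by one and, when $s=0$, forces one to borrow from the preceding digit, changing both the quotient and the remainder. Making sure every subcase of $(\mu,\eta,s)$ is handled correctly, and distinguishing cleanly between $r-b<0$ (trivially outside $\nums$) and $r-b\ge 0$ but failing \eqref{eq:membership}, is the only delicate point; the rest is a routine transcription of the even-case argument.
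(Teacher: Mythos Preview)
Your proposal is correct. The route differs from the paper's in two places, and the comparison is worth recording.

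For the inclusion $\operatorname{Ap}(\nums,\ubetti(\nums))\subseteq\text{RHS}$, the paper obtains the canonical triple $(\lambda,\mu,\eta)$ by successively applying the relators of the minimal presentation (reduce $\mu$ to $\{0,1\}$ using $2(a+1)=a+(a+2)$, then bound $\eta$ and $\lambda$ via the remaining relators together with the Ap\'ery hypothesis), exactly as in Lemma~\ref{lem:ap-a-even}. You instead read off $(\lambda,\mu,\eta)$ directly as the seed $\phi_r$ of Notation~\ref{notation-ell}; the constraints $\mu\in\{0,1\}$ and $\eta\le c-\mu$ then drop out of $\rem_r<a$ with no work, and only the bound on $\lambda$ remains to be extracted from the Ap\'ery condition. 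For the reverse inclusion, the paper runs a case-by-case verification of $r-b\notin\nums$ via~\eqref{eq:membership}, splitting on $\mu$ and on which $b\in\ubetti(\nums)$ is subtracted; you compress this into a single computation of the exact threshold $\lambda\ge c+2-\mu$ (respectively $\lambda\ge c+2$, $\lambda\ge c+1$, $\lambda\ge a+2$ in the corner case $s=0$) for $r-b\in\nums$, which simultaneously proves both inclusions. Your framing ties the lemma back to the seed machinery of Section~\ref{sec-first} and avoids the reduction-via-relators step; the paper's version is more self-contained within Section~\ref{sec-Betti}. The arithmetic content, in particular the use of~\eqref{eq:membership} and the identity $(c+1)(a+2)=(c+2)a+1$, is the same in both.
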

\begin{proof}
Set $k=(a-1)/2$. Recall that, in this setting,
$\ubetti(\nums)=
\left\{\left(k+2\right)a,\left(k+1\right)(a+2)\right\}$.  Notice that
$\{\lambda a+\mu(a+1)+\eta (a+2) : \lambda\in \{0,\dots,k+1-\mu\},
\eta\in \{0,\dots,k-\mu\},\mu\in\{0,1\}\}$ equals
    \begin{multline*}
    \{\lambda a+\eta (a+2) : \lambda\in \{0,\dots,k+1\}, \eta\in
    \{0,\dots,k\}\} \\ \cup \{\lambda a+(a+1)+\eta (a+2) :
    \lambda\in \{0,\dots,k\}, \eta\in \{0,\dots,k-1\}\}.
    \end{multline*}
We can argue as in the proof of Lemma~\ref{lem:ap-a-even} to deduce
that if $r\in \operatorname{Ap}(\nums,\{(k+2)a,(k+1)(a+2)\})$,
then $r=\lambda a+\mu(a+1)+\eta(a+2)$ with $\mu\in \{0,1\}$, and if
$\mu=0$, then $\lambda\in \{0,\dots,k+1\}$ and $\eta\in
\{0,\dots,k\}$; while if $\mu=1$, then $\lambda\in \{0,\dots,k\}$ and
$\eta\in \{0,\dots,k-1\}$.

For the other inclusion, we prove that
\begin{itemize}
        \item $\{\lambda a+\eta (a+2) : \lambda\in \{0,\dots,k+1\},
          \eta\in \{0,\dots,k\}\}\subseteq
          \operatorname{Ap}(\nums,\{(k+2)a,(k+1)(a+2)\})$, and
        \item $\{\lambda a+(a+1)+\eta (a+2) : \lambda\in
          \{0,\dots,k\}, \eta\in \{0,\dots,k-1\}\}\subseteq
          \operatorname{Ap}(\nums,\{(k+2)a,(k+1)(a+2)\})$.
\end{itemize}
    
We deal with both inclusions separately. We start with $r=\lambda
a+\eta (a+2)$ with $0\le \lambda\le k+1$ and $0\le \eta\le k$. In
this case, $2\lambda\le 2k+2=a+1$ and $2\eta\le 2k=a-1$.
\begin{itemize}
\item Let $n=r-(k+2)a$. Then, $n=-\nu a+\eta(a+2)$, with
  $\nu\in\{1,\dots,k+2\}$. We have $n\bmod a=2\eta$, while $2\lfloor
  n/a\rfloor= 2\lfloor -\nu+\eta + 2\eta/a\rfloor$. As $2\eta\le
  2k<a$, we get $2\lfloor n/a\rfloor= 2\eta-2\nu < 2\eta$, which
  by \eqref{eq:membership}, forces $n\not\in\nums$.
        
\item Now, set $n=r-(k+1)(a+2)$. Then, $n=\lambda a - \nu(a+2)$, with
  $\nu\in \{1,\dots,k+1\}$. If $\nu=k+1$, then $n=\lambda
  a-(k+1)(a+2)=\lambda a-(a+1)-(k+1)a\le -(a+1)<0$, and so $n\not\in
 \nums$. Thus, we may assume that $\nu\le k$, and so $2\nu\le
  a-1$. Thus, $n\bmod a=(-2\nu)\bmod a=a-2\nu$. Also, $2 \lfloor
  n/a\rfloor= 2\lfloor \lambda -\nu -2\nu/a
  \rfloor=2(\lambda-\nu-1)$. Now, $a-2\nu\le 2\lambda -2\nu-2$ if and
  only if $a\le 2\lambda -2$. We know that $2\lambda -2\le
  2(k+1)-2=2k=a-1$, and so by \eqref{eq:membership}, $n\not\in
 \nums$.     
    
 Now, let $r=\lambda a+(a+1)+\eta (a+2)$ with $0\le \lambda\le k$ and
 $0\le \eta\le k-1$. In particular, $2\lambda\le 2k=a-1$ and $2\eta\le
 2k-2=a-3$.
\end{itemize}

\begin{itemize} 
\item Take $n=r-(k+2)a$. Then, $n=-\nu a+(a+1)+\eta(a+2)$, with
  $\nu\in \{2,\dots,k+2\}$. In this case, $n\bmod a=2\eta+1$ and
  $2\lfloor n/a\rfloor=2\lfloor -\nu+1+\eta+
  (2\eta+1)/a\rfloor=2(\eta-\nu+1)$. Hence, in view of
  \eqref{eq:membership}, $n\in\nums$ if and only if
  $2\eta+1\le 2\eta-2\nu+2$, which is equivalent to $2\nu\le
  1$. Therefore, $n\not\in\nums$.

\item Set $n=r-(k+1)(a+2)$. Then, $n=\lambda a+(a+1)-\nu(a+2)$, with
  $\nu\in \{2,\dots,k+1\}$. If $\nu=k+1$, we have that $n=\lambda
  a+(a+1)-(a+1)-(k+1)a<0$, and so $n\not\in\nums$. Thus, we
  suppose that $\nu\le k$, and consequently $2\nu\le a-1$. It follows
  that $n\bmod a=(1-2\nu)\bmod a=a+1-2\nu$, and $2\lfloor n/a\rfloor =
  2\lfloor \lambda +1 -\nu +(1-2\nu)/a\rfloor=2(\lambda-\nu)$. Now,
  $a+1-2\nu\le 2\lambda-2\nu$ if and only if $a+1\le 2\lambda$, which
  does not hold, obtaining by \eqref{eq:membership} that $n\not\in
 \nums$.
\end{itemize}
This proves the two inclusions described above and concludes the proof.
\end{proof}

Observe that with the help of Lemmas~\ref{lem:ap-a-even} and
\ref{lem:ap-a-odd} and Theorem~\ref{th-appendix}, we have a full
description of $\ulf(\nums)$.  We see next what are all the
factorizations of these elements and what is their length (compare
with Theorem~\ref{theo-length}).

\begin{proposition}\label{pr:same-length}
Let $\nums=\langle a, a+1,a+2\rangle$ with $a$ a positive integer
greater than two. Every $r\in \ulf(\nums)$ can be expressed as
$r=\alpha a +\beta (a+1)+\gamma (a+2)$ with $\beta\in\{0,1\}$ and
\[   \fac(r,\nums)=\{
    (\alpha,\beta,\gamma)+\lambda(-1,2,-1) :
\lambda\in\{0,\dots,\min\{\alpha,\gamma\}\}.
 \]
Moreover, the length of any factorization of $r$ is $\lfloor
r/a\rfloor$.
\end{proposition}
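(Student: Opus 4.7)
The plan is to combine the explicit descriptions of $\operatorname{Ap}(\nums,\ubetti(\nums))$ provided by Lemmas~\ref{lem:ap-a-even} and \ref{lem:ap-a-odd} with Theorem~\ref{th-appendix}, and then recycle the trade argument from Proposition~\ref{prop-omega}. That argument only used the equal-length property of two factorizations (not the bound $r<\la$), and the equal-length property is built into the very definition of $\ulf(\nums)$, so it will be available here for free.

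First I would invoke $\ulf(\nums)=\operatorname{Ap}(\nums,\ubetti(\nums))$ from Theorem~\ref{th-appendix} and apply Lemma~\ref{lem:ap-a-even} (for $a$ even) or Lemma~\ref{lem:ap-a-odd} (for $a$ odd) to obtain, for every $r\in\ulf(\nums)$, an expression $r=\alpha a+\beta(a+1)+\gamma(a+2)$ with $\beta\in\{0,1\}$ and with $\alpha,\gamma$ in the explicit ranges furnished by those lemmas. The crucial consequence of the ranges is that in all cases $\beta+2\gamma\leq a-1$: for $a$ even one has $\beta\leq 1$ and $\gamma\leq a/2-1$; for $a$ odd with $\beta=0$ one has $\gamma\leq (a-1)/2$, and with $\beta=1$ one has $\gamma\leq (a-3)/2$. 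Writing then $r=(\alpha+\beta+\gamma)a+(\beta+2\gamma)$ with $0\leq \beta+2\gamma<a$, Euclidean division gives $\lfloor r/a\rfloor =\alpha+\beta+\gamma$, so the factorization $(\alpha,\beta,\gamma)$ has length $\lfloor r/a\rfloor$. Since $r\in\ulf(\nums)$, every factorization of $r$ has this common length, settling the "moreover" clause.

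Next I would establish the two inclusions for $\fac(r,\nums)$. For each $\lambda\in\{0,\ldots,\min\{\alpha,\gamma\}\}$, the vector $(\alpha,\beta,\gamma)+\lambda\omega=(\alpha-\lambda,\beta+2\lambda,\gamma-\lambda)$ lies in $\mbn^3$, and the identity $-a+2(a+1)-(a+2)=0$ shows it is still a factorization of $r$. Conversely, let $(\alpha',\beta',\gamma')\in\fac(r,\nums)$. Equal length gives $\alpha'+\beta'+\gamma'=\alpha+\beta+\gamma$; substituting into $r=(\alpha'+\beta'+\gamma')a+\beta'+2\gamma'=(\alpha+\beta+\gamma)a+\beta+2\gamma$ yields $\beta'+2\gamma'=\beta+2\gamma$. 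In particular $\beta'\equiv\beta\pmod 2$, and as $\beta\in\{0,1\}$ we must have $\beta'=\beta+2\lambda$ for some $\lambda\in\mbn$, whence $\gamma'=\gamma-\lambda$ and $\alpha'=\alpha-\lambda$. Non-negativity of $\alpha'$ and $\gamma'$ forces $\lambda\leq \min\{\alpha,\gamma\}$, giving $(\alpha',\beta',\gamma')=(\alpha,\beta,\gamma)+\lambda\omega$ as claimed.

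The main (and really the only) subtle point is ensuring that the seed $(\alpha,\beta,\gamma)$ produced by the Apéry-set description has both $\beta\in\{0,1\}$ and $\beta+2\gamma<a$, for these two facts simultaneously supply the length formula and the uniqueness of the seed modulo~$\omega$. Both are immediate consequences of the explicit bounds in Lemmas~\ref{lem:ap-a-even} and \ref{lem:ap-a-odd}, so beyond carefully unpacking those ranges no additional work is required.
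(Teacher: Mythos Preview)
Your proposal is correct. Both you and the paper begin identically: invoke Theorem~\ref{th-appendix} together with Lemmas~\ref{lem:ap-a-even} and \ref{lem:ap-a-odd} to produce a seed $(\alpha,\beta,\gamma)$ with $\beta\in\{0,1\}$ and the explicit range on $\gamma$, and both finish the length claim by checking $\beta+2\gamma<a$. The genuine difference is in how you pin down $\fac(r,\nums)$. The paper argues structurally: given a second factorization $z'$, it builds a chain $z_1,\dots,z_n$ of factorizations where each step comes from a relator in the minimal presentation, observes that $r\in\operatorname{Ap}(\nums,\ubetti(\nums))$ forbids the unbalanced relators, so every step is $\pm(1,-2,1)$, and concludes $z'-z\in\mathbb{Z}\omega$. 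You instead recycle the bare-hands computation of Proposition~\ref{prop-omega}: equal length plus the equation for $r$ immediately force $\beta'+2\gamma'=\beta+2\gamma$, hence $\beta'\equiv\beta\pmod 2$, and since $\beta\in\{0,1\}$ is the minimal residue you get $\beta'\ge\beta$ and the rest follows. Your route is shorter and entirely self-contained (it uses only the definition of $\ulf$ and the seed bounds), while the paper's route is more thematic for Section~\ref{sec-Betti}, making visible that the \emph{reason} all trades are multiples of $\omega$ is that the only accessible Betti element is the balanced one $2(a+1)$.
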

\begin{proof}
Let $r$ be an element in $\nums$ whose factorizations have all the
same length. By Theorem~\ref{th-appendix}, $r\in
\operatorname{Ap}(\nums, \ubetti(\nums))$. By
Lemmas~\ref{lem:ap-a-even} and \ref{lem:ap-a-odd} we know that $r$ can
be expressed as $r=\alpha a+\beta (a+1)+\gamma(a+2)$, with
$\alpha,\gamma\in \mathbb{N}$ having some restrictions and
$\beta\in\{0,1\}$. Let $z=(\alpha,\beta,\gamma)$, which is in
$\fac(r,\nums)$. Suppose that $r$ has another factorization
$z'=(\alpha',\beta',\gamma')$. Arguing as in the proof of
Theorem~\ref{th-appendix}, we deduce that there exists
$z_1,\dots,z_n\in \fac(r,\nums)$ such that $z_1=z$, $z_n=z'$, and
$(z_i,z_{i+1})=(x_i+y_i,x_{i+1}+y_i)$ with $y_i\in\mathbb{N}^3$ and
$x_i,x_{i+1}\in \fac(b_i,\nums)$ for some $b_i\in\betti(\nums)$ (with
$x_i\neq x_{i+1}$, since the pair $(x_i,x_{i+1})$ or its symmetry is
part of a minimal presentation). Recall that, in particular, this
implies that $r\in b_i+\nums$, and as $r\in \operatorname{Ap}(\nums,
\ubetti(\nums))$, we deduce that $b_i=2(a+1)$ for all $i\in
\{1,\dots,n-1\}$. Observe that
$\fac(2(a+1),\nums)=\{(1,0,1),(0,2,0)\}$. It follows that
$z_i-z_{i+1}=x_i-x_{i+1}\in \{(1,-2,1),(-1,2,-1)\}$. Hence,
$z'-z=(z_n-z_{n-1})+\dots+(z_3-z_2)+(z_2-z_1)=\lambda (1,-2,1)$ with
$\lambda\in\mathbb{Z}$. Thus, $z'=(\alpha,\beta,\gamma)+\lambda
(-1,2,-1)$. If $\lambda<0$, from $\beta\in\{0,1\}$, we deduce that
$\beta'<0$, a contradiction. If $\lambda>\alpha$, then $\alpha'<0$,
which is impossible, whence $0\le \lambda\le \alpha$; a similar
argument shows that $\lambda\le \gamma$.

Finally, let $z=(\alpha,\beta,\gamma)+\lambda(1,-2,1)$ be a
factorization of $r$. Then, $|z|=\alpha+\beta+\gamma$. Notice that
$r=\alpha a+\beta (a+1)+\gamma a(a+2)$ and consequently $\lfloor
r/a\rfloor=\lfloor \alpha+\beta+\gamma + (\beta+2\gamma)/a\rfloor$. If
$a$ is even, then $\gamma\le a/2-1$ (Lemma~\ref{lem:ap-a-even}), and
consequently $\beta+2\gamma\le 1+a-2=a-1<a$. If $a$ is odd, then
$\gamma\le (a-1)/2$, if $\beta=0$, and $2\gamma\le (a-1)/2-1$, if
$\beta=1$. Thus, for $a$ odd, $\beta+2\gamma\le a-1<a$. In both cases,
$\beta+2\gamma<a$, yielding $\lfloor
r/a\rfloor=\alpha+\beta+\gamma=|z|$.
\end{proof}

For every $\ell\in \mathbb{N}$, recall that $\nums^\ell= \{r\in\nums :
\setl(r,\nums)=\{\ell\}\}$, that is, the set of elements in $\nums$
such that all its factorizations have length $\ell$. In view of
Theorem~\ref{th-appendix},
\[
\bigcup_{\ell\in \mathbb{N}}\nums^\ell =
\operatorname{Ap}(\nums,\ubetti(\nums)).
\]
Clearly, for $\ell\neq \ell'$, $\nums^{\ell}\cap
\nums^{\ell'}=\emptyset$. Also, according to
Lemmas~\ref{lem:ap-a-even} and \ref{lem:ap-a-odd} and
Proposition~\ref{pr:same-length}, if $\ell>a$, then
$\nums^\ell=\emptyset$. In particular,
\[
\bigcup\nolimits_{\ell=0}^{a}\nums^\ell =
\operatorname{Ap}(\nums,\ubetti(\nums))
\]
is a partition of
$\operatorname{Ap}(\nums,\ubetti(\nums))=\ulf(\nums)$. The explicit
description of $\nums^\ell$ can be derived from
Lemmas~\ref{lem:ap-a-even} and \ref{lem:ap-a-odd} (see also
Corollary~\ref{cor-length}).

\begin{proposition}\label{pr:partitionSell}
Let $\nums=\langle a, a+1,a+2\rangle$ with $a$ a positive integer
greater than two. Let $\ell\in\{0,\dots,a\}$. Then, if $a$ is even,
\[
\nums^\ell=\begin{cases} \llbracket \ell a,\ell(a+2)\rrbracket,
\text{if }0 \le \ell \le \frac{a}2-1,\\ \left\llbracket \frac{a}2 a,
(a+1)+(\frac{a}2-1)(a+2)\right\rrbracket, \text{if }
\ell=\frac{a}2,\\ \left\llbracket \frac{a}2
a+(a+1)+(\ell-\frac{a}2-1)(a+2),
(\ell-\frac{a}2)a+(a+1)+(\frac{a}2-1)(a+2) \right\rrbracket, \text{if
} \frac{a}2< \ell \le a;
    \end{cases}
\]
 while if $a$ is odd,
\[
   \nums^\ell=\begin{cases} \llbracket\ell a,
    \ell(a+2)\rrbracket, \text{if } 0\le \ell\le
    \frac{a-1}2,\\ \left\llbracket (\frac{a-1}2+1)a,
    a+\frac{a-1}2(a+2)\right\rrbracket, \text{if }
    \ell=\frac{a-1}2+1,\\ \left\llbracket
    (\frac{a-1}2+1)a+(\ell-\frac{a-1}2-1)(a+2),
    (\ell-\frac{a-1}2+1)a+(\frac{a}2-1)(a+2) \right\rrbracket,
    \text{if } \frac{a}2< \ell \le a.
    \end{cases}
    \]
\end{proposition}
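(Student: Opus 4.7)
The plan is to combine Theorem~\ref{th-appendix}, the Apéry‐set descriptions of Lemmas~\ref{lem:ap-a-even} and \ref{lem:ap-a-odd}, and Proposition~\ref{pr:same-length}. Together these say: an element $r$ lies in $\nums^\ell$ if and only if $r$ admits a (unique) normal form $r=\alpha a+\beta(a+1)+\gamma(a+2)$ with $\beta\in\{0,1\}$ and with $(\alpha,\gamma)$ in the explicit rectangle coming from the Apéry set, and $\alpha+\beta+\gamma=\ell$. So the task reduces to: \emph{fix $\ell$, range over admissible triples $(\alpha,\beta,\gamma)$ with $\alpha+\beta+\gamma=\ell$, and compute the set of values $r=\ell a+(\beta+2\gamma)$}. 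The value $\beta+2\gamma$ is what controls the size of $r$, and since $\beta$ gives its parity and $\gamma$ moves by $\pm 1$, I will argue that the resulting set of $r$'s always forms an interval of consecutive integers; the endpoints of this interval are exactly the formulas in the proposition.

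For the ``small'' regime, namely $0\le \ell\le L=\lfloor(a-1)/2\rfloor$, the statement is already a consequence of Corollary~\ref{cor-length}, which gives $\nums^\ell=\llbracket \ell a,\ell(a+2)\rrbracket$. This covers the first line of both the even and odd cases of the proposition. The genuinely new work is in the middle line $\ell=\lceil a/2\rceil$ and the top line $\lceil a/2\rceil<\ell\le a$. In both regimes I will enumerate as follows: for each $\beta\in\{0,1\}$, solve $\alpha+\gamma=\ell-\beta$ subject to the Apéry bounds $\alpha\le a/2$ (resp.\ $\alpha\le (a-1)/2+1-\beta$) and $\gamma\le a/2-1$ (resp.\ $\gamma\le (a-1)/2-\beta$), obtaining an interval $\gamma\in\llbracket \max(0,\ell-\beta-A_\beta),\,G_\beta\rrbracket$, where $A_\beta,G_\beta$ are the relevant upper bounds on $\alpha,\gamma$. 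Each such $\gamma$ produces $r=\ell a+\beta+2\gamma$, so the $\beta=0$ contribution gives an arithmetic progression of step $2$ of even values $\beta+2\gamma$ and the $\beta=1$ contribution gives one of odd values; merging the two ranges collapses to a single interval of consecutive integers, which is then translated by $\ell a$. A direct computation of the smallest and largest elements of this interval matches the endpoints claimed: the minimum is realized at $(\alpha,\beta,\gamma)=(A_0, 0, \ell-A_0)$ (forcing $\alpha$ to its upper bound and $\gamma$ to its lower bound), and the maximum at $(\alpha,\beta,\gamma)=(\ell-G_0,0,G_0)$ (forcing $\gamma$ to its upper bound).

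The main obstacle, as usual with these parity‐split bookkeeping arguments, is to verify cleanly that the merged set of admissible $\beta+2\gamma$ values has no gaps. For this I will use the following uniform argument: given any candidate value $v$ with $v_{\min}\le v\le v_{\max}$, set $\beta:=v\bmod 2$, $\gamma:=\lfloor v/2\rfloor$, $\alpha:=\ell-\beta-\gamma$, and check directly that $(\alpha,\beta,\gamma)$ satisfies the Apéry constraints, so that $v=\beta+2\gamma$ is attained. The checks split neatly by whether $\ell=\lceil a/2\rceil$ (where only the bound $\gamma\le G_\beta$ is active and forces $v\le 2G_\beta+\beta$) or $\ell>\lceil a/2\rceil$ (where the bound $\alpha\le A_\beta$ becomes active and forces $v\ge 2\ell-2A_\beta-\beta$). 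Finally I will sanity‐check the top endpoint in the boundary case $\ell=a$: in the even case this produces the singleton $\{(a/2)a+(a+1)+(a/2-1)(a+2)\}$ and in the odd case $\{((a-1)/2+1)a+((a-1)/2)(a+2)\}=\{a^2+a-1\}$, both agreeing with the explicit formulas in the statement after simplification.
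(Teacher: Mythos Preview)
Your approach is essentially the same as the paper's: both reduce to enumerating admissible Ap\'ery triples $(\lambda,\mu,\eta)$ with $\lambda+\mu+\eta=\ell$ and then showing that the resulting values $r=\ell a+(\mu+2\eta)$ fill an interval of consecutive integers, by producing an explicit triple for each intermediate value. Your gap-filling construction (set $\beta=v\bmod 2$, $\gamma=\lfloor v/2\rfloor$, $\alpha=\ell-\beta-\gamma$ and check the Ap\'ery bounds) is in fact a bit cleaner than the paper's, which writes out the two progressions $\ell a+2k$ and $\ell a+2k+1$ case by case.

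One small slip to fix: you assert that both extremal triples occur at $\beta=0$. This is correct for odd $a$ (where the Ap\'ery bounds on $\alpha$ and $\gamma$ each drop by one when $\beta=1$), but for even $a$ the bounds $\alpha\le a/2$ and $\gamma\le a/2-1$ are independent of $\beta$, so in the range $a/2<\ell\le a$ the minimum is realized at $(\alpha,\beta,\gamma)=(a/2,1,\ell-a/2-1)$ and the maximum at $(\ell-a/2,1,a/2-1)$, both with $\beta=1$---exactly as in the paper's displayed endpoints. This would surface immediately in the ``direct computation'' you promise (and indeed your own $\ell=a$ sanity check already uses $\beta=1$), so it does not affect the soundness of the argument.
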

\begin{proof}
Recall that by Theorem~\ref{th-appendix}, the elements having all its
factorizations of equal length are the elements of
$A:=\operatorname{Ap}(\nums,\ubetti(\nums))$,
and that Lemmas~\ref{lem:ap-a-even} and \ref{lem:ap-a-odd} describe
this set for $a$ even and odd, respectively.

First, suppose that $a$ is even. The elements in $\nums^\ell$
are the elements having an expression of the form $\lambda a + \mu
(a+1)+\eta(a+2)$ with $\lambda\in \{0,\dots, a/2\}$,
$\mu\in\{0,1\}$, $\eta\in \{0,\dots,a/2-1\}$, and
$\lambda+\mu+\eta=\ell$.
\begin{itemize}
\item If $0\le \ell \le a/2-1$, then $\ell a$ is the smallest element
  in $A$ having a factorization of length $\ell$, while $\ell(a+2)$ is
  the largest. Notice that $\ell a+2k= (\ell-k)a+k(a+2)\in A$ and
  $\ell a+2k+1=(\ell-k-1)a+(a+1)+k(a+2)\in A$, for all
  $k\in\{0,\dots,\ell-1\}$. Thus, $\nums^\ell = \llbracket \ell a,
  \ell (a+2)\rrbracket$.
\item If $\ell=\frac{a}2$, the fact that $\eta\le \frac{a}2-1$, forces
  the largest element in $\nums^\ell$ to be
  $(a+1)+(\frac{a}2-1)(a+2)$, while the smallest is $\frac{a}2a$. As
  $\frac{a}2 a+2k= (\frac{a}2 -k)a+k(a+2)$ and $\frac{a}2 a+2k+1=
  (\frac{a}2 -k-1)a+(a+1)+k(a+2)$, we deduce that
  $\nums^\ell=\llbracket
  \frac{a}2a,(a+1)+(\frac{a}2-1)(a+2)\rrbracket$.
\item Finally, let $\frac{a}2 < \ell \le a$. The smallest element in
  $\nums^\ell$ is $\frac{a}2 a+(a+1)+(\ell -\frac{a}2-1)(a+2)$, while
  the largest is $(\ell-\frac{a}2)a+(a+1)+(\frac{a}2-1)(a+2)$, and the
  rest of the argument goes as in the preceding cases.
\end{itemize}
The case $a$ odd is analogous and for this reason we leave the details
to the reader.
\end{proof}

\begin{example}
By using again \texttt{numericalsgps}, we can easily compute the sets
$\nums^\ell$.
\begin{verbatim}
gap> s:=NumericalSemigroup(10,11,12);;
gap> ap:=AperyList(s,60);;
gap> Display(List([0..10], l->Filtered(ap, 
          x->LengthsOfFactorizationsElementWRTNumericalSemigroup(x,s)=[l])));
[ [    0 ],
  [   10,   11,   12 ],
  [   20,   21,   22,   23,   24 ],
  [   30,   31,   32,   33,   34,   35,   36 ],
  [   40,   41,   42,   43,   44,   45,   46,   47,   48 ],
  [   50,   51,   52,   53,   54,   55,   56,   57,   58,   59 ],
  [   61,   62,   63,   64,   65,   66,   67,   68,   69 ],
  [   73,   74,   75,   76,   77,   78,   79 ],
  [   85,   86,   87,   88,   89 ],
  [   97,   98,   99 ],
  [  109 ] ]
\end{verbatim} 
\end{example}

Now, let $d\in \mathbb{N}$. Recall that $\nums_d=\{ r\in\nums :
\card(\fac(r,\nums))=d\}$.  Notice that from
Proposition~\ref{pr:same-length} and Lemmas~\ref{lem:ap-a-even} and
\ref{lem:ap-a-odd}, we have that the maximal denumerant of an element
with all its factorizations of the same length is $a/2$ if $a$ is
even, and $(a-1)/2+1=(a+1)/2$ if $a$ is odd. That means that
$\nums_d\cap \ulf(\nums)$ is empty for $d$ bigger than $\lceil
a/2\rceil$, and clearly
\[
\bigcup\nolimits_{d=0}^{\lceil a/2\rceil} (\nums_d\cap \ulf(\nums)) =
\operatorname{Ap}(\nums,\ubetti(\nums))
\]
is a partition of
$\operatorname{Ap}(\nums,\ubetti(\nums))=\ulf(\nums)$. Next result
related to Corollary~\ref{cor-dim}.

\begin{proposition}\label{prop:Sd}
Let $\nums=\langle a, a+1,a+2\rangle$ with $a$ a positive integer
greater than two. If $a$ is even, then for $d\in
\{1,\dots,a/2\}$,
\begin{multline*} 
   \nums_d\cap \ulf(\nums)= \left\{(d-1)a +\mu(a+1)+\eta (a+2) :
   \mu\in \{0,1\}, \eta\in
   \left\{d-1,\dots,\frac{a}2-1\right\}\right\}\\ \cup \left\{ \lambda
   a +\mu(a+1)+(d-1)(a+2) : \mu\in \{0,1\}, \lambda\in
   \left\{d,\dots,\frac{a}2\right\}\right\}.
    \end{multline*}
    If $a$ is odd, then for $d\in \{1,\ldots,(a-1)/2\}$,
    \begin{multline*} 
   \nums_d\cap \ulf(\nums)= \left\{(d-1)a +\mu(a+1)+\eta (a+2) :
   \mu\in \{0,1\}, \eta\in
   \left\{d-1,\dots,\frac{a-1}2-\mu\right\}\right\}\\ \cup \left\{
   \lambda a +\mu(a+1)+(d-1)(a+2) : \mu\in \{0,1\}, \lambda\in
   \left\{d,\dots,\frac{a-1}2+1-\mu\right\}\right\}.
    \end{multline*}    
    while for $d=(a+1)/2$,
    \[
    \nums_{\frac{a+1}2}\cap\ulf(\nums)=\left\{\frac{a-1}{2} a+\frac{a-1}{2}(a+2),
    \frac{a+1}{2} a+\frac{a-1}{2}(a+2)\right\}.
    \]
\end{proposition}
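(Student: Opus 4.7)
The plan is to combine Proposition~\ref{pr:same-length} with the descriptions of $\operatorname{Ap}(\nums,\ubetti(\nums))$ supplied by Lemmas~\ref{lem:ap-a-even} and~\ref{lem:ap-a-odd}. By Theorem~\ref{th-appendix}, $\ulf(\nums)=\operatorname{Ap}(\nums,\ubetti(\nums))$, so every $r\in\nums_d\cap\ulf(\nums)$ admits an expression $r=\alpha a+\beta(a+1)+\gamma(a+2)$ with $\beta\in\{0,1\}$ and $(\alpha,\gamma)$ confined to the bounds stated in the relevant parity lemma. This canonical triple $(\alpha,\beta,\gamma)$ attached to $r$ is unique, because any two such triples giving the same $r$ would, by Proposition~\ref{pr:same-length}, differ by an integer multiple of $\omega=(-1,2,-1)$, which shifts the middle coordinate by an even integer and is incompatible with both middle coordinates lying in $\{0,1\}$. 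By the same proposition, $\card(\fac(r,\nums))=\min(\alpha,\gamma)+1$.

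The condition $r\in\nums_d$ then translates to $\min(\alpha,\gamma)=d-1$, which I split into the two disjoint subcases (a) $\alpha=d-1$ and $\gamma\geq d-1$, and (b) $\gamma=d-1$ and $\alpha\geq d$. Intersecting each with the range of admissible triples and noting that distinct canonical triples yield distinct elements $r$ produces precisely the two unions appearing in the statement.

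For $a$ even, Lemma~\ref{lem:ap-a-even} gives the $\mu$-independent bounds $\alpha\in\{0,\ldots,a/2\}$, $\gamma\in\{0,\ldots,a/2-1\}$. Case (a) contributes the triples with $\alpha=d-1$ and $\gamma\in\{d-1,\ldots,a/2-1\}$ (non-empty exactly when $d\leq a/2$), while case (b) contributes those with $\gamma=d-1$ and $\alpha\in\{d,\ldots,a/2\}$; together they reproduce the formula for $\nums_d\cap\ulf(\nums)$ verbatim. For $a$ odd, Lemma~\ref{lem:ap-a-odd} makes the bounds on $\alpha$ and $\gamma$ depend on $\mu=\beta$, namely $\alpha\in\{0,\ldots,(a-1)/2+1-\mu\}$ and $\gamma\in\{0,\ldots,(a-1)/2-\mu\}$. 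For $d\in\{1,\ldots,(a-1)/2\}$, the same two-subcase decomposition yields the stated two families, with the $\mu$-dependence carried through unchanged.

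The only non-routine point is the extremal value $d=(a+1)/2$ when $a$ is odd. Here $d-1=(a-1)/2$ and $\gamma\leq(a-1)/2-\mu$, forcing $\mu=0$ and $\gamma=(a-1)/2$; the remaining constraint $\alpha\geq(a-1)/2$ combined with $\alpha\leq(a+1)/2$ leaves only $\alpha\in\{(a-1)/2,(a+1)/2\}$, producing the two listed elements $\tfrac{a-1}{2}a+\tfrac{a-1}{2}(a+2)$ and $\tfrac{a+1}{2}a+\tfrac{a-1}{2}(a+2)$. The main obstacle is purely bookkeeping: keeping the parity-dependent ranges and the $\mu$-dependence straight while tracking that the subcases (a) and (b) remain disjoint (guaranteed by the strict inequality $\alpha\geq d$ in case (b)) and that each valid canonical triple corresponds to a distinct element of $\nums$.
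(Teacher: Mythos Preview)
Your proof is correct and follows essentially the same approach as the paper's own proof: combine Theorem~\ref{th-appendix} (to identify $\ulf(\nums)$ with the Ap\'ery set), the parity lemmas~\ref{lem:ap-a-even} and~\ref{lem:ap-a-odd} (to parametrise its elements by canonical triples), and Proposition~\ref{pr:same-length} (to read off the denumerant as $\min(\alpha,\gamma)+1$). The paper's proof is considerably terser, merely pointing to these ingredients and spelling out only the extremal case $d=(a+1)/2$; you supply the uniqueness-of-triple argument and the explicit case split $\min(\alpha,\gamma)=d-1$ into $\alpha=d-1,\ \gamma\ge d-1$ versus $\gamma=d-1,\ \alpha\ge d$, which the paper leaves implicit.
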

\begin{proof}
If $a$ is even, the proof follows from Theorem~\ref{th-appendix},
Proposition~\ref{pr:same-length} and Lemma~\ref{lem:ap-a-even} (in
this setting $d\in \{1,\dots,a/2\}$).  If $a$ is odd, we use
Lemma~\ref{lem:ap-a-odd} instead. For $d\in \{1,\dots,(a-1)/2\}$, the
description of $\nums_d$ is similar to the one given for $a$ even,
while $d=(a+1)/2$, forces $\eta\ge \min\{\lambda,\eta\}=d-1=(a-1)/2$,
which in turn implies that the only possible choices of $\mu$, $\eta$
and $\lambda$ in in Lemma~\ref{lem:ap-a-odd} are $\mu=0$,
$\eta=(a-1)/2$ and $\lambda\in \{(a-1)/2,(a+1)/2\}$.
\end{proof}

\begin{example}
Let us illustrate how can we use \texttt{numericalsgps} to calculate
the set $\nums_d\cap\ulf(\nums)$.
\begin{verbatim}
gap> s:=NumericalSemigroup(10,11,12);;
gap> bt:=BettiElements(s);
[ 22, 60 ]
gap> ap:=AperyList(s,bt[2]);;
gap> Display(List([1..5], l->Filtered(ap, x->Length(Factorizations(x,s))=l)));
[ [ 0, 61, 10, 11, 12, 20, 21, 23, 24, 30, 31, 35, 36, 40, 41, 47, 48, 50, 
    51, 59 ], 
  [ 62, 63, 69, 73, 22, 32, 33, 34, 42, 43, 45, 46, 52, 53, 57, 58 ], 
  [ 64, 65, 67, 68, 74, 75, 79, 85, 44, 54, 55, 56 ], 
  [ 66, 76, 77, 78, 86, 87, 89, 97 ], 
  [ 88, 98, 99, 109 ] ]
gap> s:=NumericalSemigroup(9,10,11);;
gap> bt:=BettiElements(s);
[ 20, 54, 55 ]
gap> ap:=Intersection(AperyList(s,bt[2]),AperyList(s,bt[3]));;
gap> Display(List([1..5], l->Filtered(ap, x->Length(Factorizations(x,s))=l)));
[ [   0,   9,  10,  11,  18,  19,  21,  22,  27,  28,  32,  33,  36,  37,  43,
      44,  45,  46 ],
  [  20,  29,  30,  31,  38,  39,  41,  42,  47,  48,  52,  53,  56,  57 ],
  [  40,  49,  50,  51,  58,  59,  61,  62,  67,  68 ],
  [  60,  69,  70,  71,  78,  79 ],
  [  80,  89 ] ]
\end{verbatim}    
\end{example}

\section*{Acknowledgements}

We are thankful to Francesc Aguil\'o-Gost, Maria Bras-Amor\'os,
Ignacio Ojeda and J. Carlos Rosales for their interest and for helping
us to contextualise our work and providing us with useful and
interesting references.

{\small

}
\end{document}